\documentclass[11pt,twoside]{article}
\usepackage{a4}
\usepackage{latexsym}
\usepackage{amsfonts}
\usepackage{amsmath}
\usepackage{amssymb}
\usepackage{url}
\usepackage{graphicx}
\usepackage[figuresright]{rotating}
\usepackage[nomarkers,nolists]{endfloat}


\interdisplaylinepenalty=800 
\hyphenation{Lun-der-burg}
\newtheorem{lemma}{Lemma}
\newtheorem{proof}{Proof}
\numberwithin{lemma}{section} \numberwithin{equation}{section}
\newtheorem{example}[lemma]{Example}
\newtheorem{theorem}[lemma]{Theorem}
\newtheorem{corollary}[lemma]{Corollary}
\newtheorem{definition}[lemma]{Definition}

\newtheorem{proposition}[lemma]{Proposition}

\voffset=-12mm
\mathsurround=2pt
\parindent=12pt
\parskip= 4.5 pt
\lineskip=3pt
\oddsidemargin=10mm
\evensidemargin=10mm
\topmargin=55pt
\headheight=12pt
\footskip=30pt
\textheight 8.1in
\textwidth=150mm
\raggedbottom
\pagestyle{myheadings}
\hbadness = 10000
\tolerance = 10000

\vspace{5cm}

\newcommand{\Res}{\operatorname{Res}}
\newcommand{\erfc}{\operatorname{erfc}}

\begin{document}

  \label{'ubf'}
\setcounter{page}{1}                                 

\markboth {\hspace*{-9mm} \centerline{\footnotesize \sc
   Solving a Bonus-Malus integral equation  }
                 }
                { \centerline                           {\footnotesize \sc
         Kucerovsky and Najafabadi                                              } \hspace*{-9mm}
               }

\vspace*{-2cm}

\begin{center}
{
       {\Large \textbf { \sc  Solving an integral equation arising from the Ruin probability of long-term Bonus--Malus systems
                               }
       }
\medskip

\footnote{We thank NSERC and Shahid Beheshti university for financial support.}
{\sc Dan Z. Ku\v{c}erovsk\'{y}}\\
{\footnotesize Fredericton}\\
{\footnotesize UNB-F
}\\
{\footnotesize e-mail: {\it dkucerov@unb.ca}}
}
\smallskip
{\sc Amir T.Payandeh Najafabadi}\\
{\footnotesize Department of Mathematical Sciences,\\ Shahid Beheshti
University, G.C. Evin,\\ 1983963113, Tehran, Iran.
}\\
{\footnotesize e-mail: {\it amirtpayandeh@sbu.ac.ir}}
\end{center}

\thispagestyle{empty}

\hrulefill

\begin{abstract}{\footnotesize
This article studies in detail the solution of an integral equation due to Rongming. The methods involve complex analysis. As an application, we find the ruin probability of a given Bonus--Malus system in a steady state. We obtain closed form solutions for the ruin probability in certain cases, and we characterize these cases.
We give conditions for the Laplace transform of a ruin probability to extend to a meromorphic function in the complex plane, we prove a very general and  almost sharp  inequality of Lundberg  type, and we extend our results to a doubly stochastic situation.}
\end{abstract}

\hrulefill

{\small \textbf{Keywords:} {Ruin probability}; {Bonus--Malus systems}; {Doubly stochastic systems}; {Fourier-Laplace transforms}; {Complex variables}}                                     

\indent{\small {\bf 2000 Mathematics Subject Classification:} Primary: 45R05 ; Secondary: 45A05; 60J65; 60K05; 62P05}

\section{Introduction and Motivation}
The integral equation to be solved can be written
\begin{equation}
-(\lambda_1+\lambda_2)\widetilde{\psi}(u)+\lambda_1
E(\widetilde{\psi}(u+C))+\lambda_2\int_0^{u}\widetilde{\psi}(u-x)f_X(x)dx=0.
\end{equation}
where $E(\widetilde{\psi}(u+C)$ is an expectation value and $C$ is a given discrete (or continuous) probability distribution. The above equation is further discussed in Rongming et al. (2007). We give some background and motivation before proceeding.

In insurance, a bonus-malus system (BMS) is a system that adjusts the premium paid by a customer according to his individual claim history.
A bonus usually is a discount in the premium which is given on the renewal of the policy if no claim is made in the previous year. A malus is an increase in the premium if there is a claim in the previous year. Bonus-malus systems are very common in vehicle insurance.

The ruin probability for a given Bonus--Malus system can be studied
via a random premium surplus process. Rongming et al. (2007) considered a surplus
process with random premium and geometric L\'evy investments
return process. They obtained an integral equation for
the ruin probability of such a process. This article focuses on solving that equation, and we provide a simple derivation of the equation in the Appendix. We should also mention some other authours who have provided
some functional equation for the ruin probability of such surplus
processes, \textit{e.g.}, Lappo (2004) derived an equation for the
ruin probability of a surplus process which contains two compound
Poisson processes. . Lim \& Qi (2009)
considered a discrete-time surplus process and established an
equation for the ultimate ruin probability. Moreover, they
obtained an upper bound for the ruin probability and studied its
properties via a simulation study.

In the context of Bonus--Malus systems, Wuyuan \& Kun (2005)
considered the Bonus--Malus system's surplus to be two compound
Poisson processes. Then, they proved the Lundberg  inequality for
the ruin probability of such a surplus process.

We consider here the steady state Bonus-Malus problem, similar to that studied by Rongming et al. (2007). We obtain very detailed information about the ruin probability of the model studied by Rongming et al.

Consider a K levels Bonus-–Malus system, with transition
probability matrix $P,$ which stabilizes, in the long run, around an
equilibrium distribution ${\pi}.$ This assumption can be
justified by considering that the annual claim numbers have been
assumed to be i.i.d and each individual policyholder will
ultimately stabilize around an equilibrium level (Denuit, et al.,
2007, \S 4.4). Indeed, in the analysis of the long-term behaviour
of a Bonus--Malus system it is not required to study all step transition
probability matrices and one may use the steady--state distribution
rather than the transition probability matrix $P,$ since each
individual policyholder with be stabilized around one of the level
equilibrium distribution ${\pi}=(\pi_1,\cdots,\pi_K)^\prime.$
Therefore, we may assume the random premium of such a long-term
Bonus--Malus system has been distributed with probability mass
function ${\bf\pi}=(\pi_1,\cdots,\pi_K)^\prime,$ where
$\sum_{k=1}^{K}\pi_k=1.$ Moreover, we suppose that the non-negative and
continuous random claim size X has a density function $f_X$ with
some additional properties to be discussed later (see Definition
1).

The ruin probability for given a Bonus–Malus system can be studied
for {\it either} a short-term {\it or} long-term period of running
time. For the short-term time the random premium of the $(n+1)^{th}$ year
depends on random claims in the $n^{th}$ year. Certainly this assumption
makes the problem very difficult to study in general.

There are two articles by Trufin \& Loisel (2013) and Wua, et at.
(2014) which study ruin probability under the discrete-time risk
model and short-term Bonus--Malus system.

Trufin \& Loisel (2013) assumed the dependence between the random premium
$C$ and the random claim $X$ is a short-term Bonus--Malus system that can
be restated in terms of an exact credibility theorem. Then, they
derived a recursive formula for ruin probability of such
Bonus--Malus system under a discrete-time risk model. Trufin \&
Loisel (2013)'s findings has been generalized to a more suitable
dependent setting,  for just two level Bonus--Malus systems by Wua,
et al. (2014).

Thus, in our setting the surplus process with initial wealth $u$ for the above
Bonus--Malus system is given by
\begin{eqnarray}
\label{surplus-process-BMS}
  U_t &=& u+\sum_{i=1}^{N_1(t)}C_i-\sum_{j=1}^{N_2(t)}X_j\\
\nonumber  &=&u+S_t,
\end{eqnarray}
where  $C_1,C_2,\cdots$ and $X_1,X_2,\cdots,$ respectively, are
two i.i.d. random samples from random premium $C$ and random claim
size $X$ and two independent Poisson processes $N_1(t)$ and
$N_2(t),$ with intensity rates $\lambda_1$ and $\lambda_2,$
stand for claims and purchase request processes, respectively.

The ruin probability for such a Bonus--Malus system, in  a steady state, can be defined by
\begin{eqnarray} \label{Ruin-Definition}
  \psi(u) &=& p(T_{u}<\infty),
\end{eqnarray}
where $u$ is a non-negative real number and $T_{u}$ is the hitting time, i.e., $T_{u}:=\inf\{t:~S_t\leq
-u\}.$

We study the ruin probability through its Laplace transform, extended to the complex plane. We give a theoretical development that allows us in many cases to  determine the  location of the poles in the complex plane and the principal parts of the poles.  In certain cases, that we characterize completely (Theorems \ref{th:characterize.finite.sums} and \ref{linear_systems}),  this means that we are able to give closed form formulas for the ruin probability (see Examples \ref{ex1} and \ref{ex4}.) In other cases, we are usually able to give an infinite series solution for the ruin probability, and when this is not the case, we can  construct approximations (see Theorem \ref{linear_systems} and  Corollary \ref{psi_u}). We demonstrate the effectiveness of our methods by giving a simple method (see Corollary \ref{cor:Lundberg1} and Example \ref{ex0}) to construct Lundberg type inequalities that are more or less the best possible.

We give examples of finding the ruin probability using our method and we consider the case of claim size following
a folded normal distribution, a Chi distribution (generalized Raleigh distribution), and a Gamma distribution. We  generalize our method to study a  doubly stochastic Bonus-Malus problem, which amounts to replacing a discrete distribution by a continuous distribution.

 The rest of this article is organized as
follows.  Section 2 gives general results on the Laplace transform of the ruin probability, including a useful condition for the Laplace transform of the ruin probability to be the restriction of a meromorphic function. Section 2 also gives a method for proving inequalities of Lundberg type.
Section 3 shows that there exists a closed form solution of a certain form for the ruin probability whenever the ruin probability can be assumed to be given by a function of exponential type. Section 3 also considers the existence of solutions by infinite series. Section 4 determines the location of the poles and the principal parts of the poles of the Laplace transform of the ruin probability, and gives our main result on constructively determining the ruin probability, either approximately or in closed form (Theorem \ref{linear_systems}).  Section 5 extends most of our results to a doubly stochastic case. Application of the results for several
Bonus--Malus systems and claim size distributions are
in Section 6. An Appendix collects some mathematical definitions and results that we make use of, proves an existence result for series solutions, and also provides a short proof of a result of Rongming et al (2007) in the case of interest.

\section{Meromorphicity and Lundberg inequalities}

We first use an integral equation provided by Rongming et al. (2007)  to derive properties of the Laplace transform of
 the ruin probability of a given Bonus--Malus
system. 

\begin{theorem}
\label{BMS-Rongming-equation} The Laplace transform of the ruin probability
$\psi(u)$ of the Bonus--Malus system \eqref{surplus-process-BMS} satisfies
\begin{align*}\label{eqn:Rongming.BM}
  \mathcal{L}(\psi(u);u,s) &=  \frac{N_{\psi}(s)}{sD(s)},\\
    \intertext{ where }
D(s)&:=-\lambda_1-\lambda_2+\lambda_1\sum_{k=1}^{K}\pi_ke^{sc_k}+\lambda_2\mathcal{L}(f_X(u),u,s),\\
N_{\psi}(s)&:=D(s)-s\lambda_1\sum_{k=1}^{K}\pi_ke^{sc_k}R_{1-\psi}(c_k,s),\\
R_{1-\psi}(c,s)&:=\int_0^c (1-\psi(u))e^{-su}\,du,
\end{align*}
and $f_X(\cdot)$ is the density of the  random claim probability, $X.$
\end{theorem}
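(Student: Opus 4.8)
The plan is to derive the stated formula by applying the Laplace transform $\mathcal{L}(\cdot\,;u,s)=\int_0^\infty(\cdot)\,e^{-su}\,du$ directly to the integral equation of Rongming et al. given in the Introduction, where I identify $\widetilde{\psi}(u)=1-\psi(u)$ as the survival (non-ruin) probability. Because $0\le\psi\le 1$, the function $\widetilde{\psi}$ is bounded, so each of these transforms exists and is analytic for $\Re s>0$; this boundedness justifies every interchange of sum and integral used below. Writing $F(s):=\mathcal{L}(f_X(u);u,s)$ and $\Psi(s):=\mathcal{L}(\widetilde{\psi}(u);u,s)$, I will transform the three terms of the equation one at a time, solve the resulting algebraic relation for $\Psi(s)$, and then pass to $\mathcal{L}(\psi;u,s)$ using $\psi=1-\widetilde{\psi}$.

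The first and third terms are routine. Linearity gives $\mathcal{L}(-(\lambda_1+\lambda_2)\widetilde{\psi})=-(\lambda_1+\lambda_2)\Psi(s)$, while the third term is a convolution, $\int_0^u\widetilde{\psi}(u-x)f_X(x)\,dx=(\widetilde{\psi}*f_X)(u)$, whose transform is $\Psi(s)F(s)$ by the convolution theorem. The term needing care is the expectation. Since $C$ is the discrete premium distribution with masses $\pi_k$ at the points $c_k$, one has the finite sum $E(\widetilde{\psi}(u+C))=\sum_{k=1}^{K}\pi_k\,\widetilde{\psi}(u+c_k)$, and the substitution $v=u+c_k$ gives
\begin{equation*}
\mathcal{L}(\widetilde{\psi}(u+c_k);u,s)=e^{sc_k}\Bigl(\Psi(s)-\int_0^{c_k}\widetilde{\psi}(v)\,e^{-sv}\,dv\Bigr),
\end{equation*}
where the boundary integral is exactly $\int_0^{c_k}(1-\psi(v))\,e^{-sv}\,dv=R_{1-\psi}(c_k,s)$.

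Collecting the three transformed terms, the coefficient multiplying $\Psi(s)$ is precisely $-\lambda_1-\lambda_2+\lambda_1\sum_{k}\pi_k e^{sc_k}+\lambda_2 F(s)=D(s)$, so the transformed equation reads $\Psi(s)D(s)=\lambda_1\sum_{k}\pi_k e^{sc_k}R_{1-\psi}(c_k,s)$, which determines $\Psi(s)$. Finally, since $\psi=1-\widetilde{\psi}$ and $\mathcal{L}(1;u,s)=1/s$, we have $\mathcal{L}(\psi;u,s)=1/s-\Psi(s)$; placing this over the common denominator $sD(s)$ yields the numerator $D(s)-s\lambda_1\sum_{k}\pi_k e^{sc_k}R_{1-\psi}(c_k,s)=N_\psi(s)$, and hence $\mathcal{L}(\psi;u,s)=N_\psi(s)/(sD(s))$, as asserted.

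I expect the only real subtlety to lie in the translation term: unlike the convolution, the shift $u\mapsto u+c_k$ displaces the lower limit of integration and therefore leaves behind the boundary remainder $R_{1-\psi}(c_k,s)$, which records the unknown values of $\psi$ on $[0,c_k]$. Retaining this remainder exactly, rather than discarding it, is what makes the identity correct; it is also precisely the undetermined boundary data that the later sections are devoted to pinning down. Everything else --- convergence for $\Re s>0$, the finiteness of the expectation sum, and the convolution theorem --- is standard given the boundedness of $\psi$.
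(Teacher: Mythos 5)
Your proposal is correct and follows essentially the same route as the paper's own proof: Laplace-transform the Rongming integral equation for the survival probability $\widetilde{\psi}=1-\psi$, use the convolution theorem on the claim term, keep the boundary remainder $R_{1-\psi}(c_k,s)$ produced by the shift $u\mapsto u+c_k$, solve for $\mathcal{L}(\widetilde{\psi})=N_1(s)/D(s)$, and pass to $\mathcal{L}(\psi)=1/s-\mathcal{L}(\widetilde{\psi})$. The only difference is presentational --- you derive the translation formula explicitly, whereas the paper states the transformed equation directly --- so nothing further is needed.
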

\begin{proof}
Theorem  \ref{Rongming-2007} in the Appendix, with $C$ taken to be the given discrete probability
distribution, shows that the survival probability $\widetilde{\psi}(u),$ of this process  satisfies
\begin{equation}\label{integro-differential-equation}
-(\lambda_1+\lambda_2)\widetilde{\psi}(u)+\lambda_1
E(\widetilde{\psi}(u+C))+\lambda_2\int_0^{u}\widetilde{\psi}(u-x)f_X(x)dx=0.
\end{equation}

 To take the Laplace
transform of this integral equation, we must use  the convolution theorem for Laplace
transforms to simplify the third term,  obtaining
\begin{eqnarray*}
  0 &=& -(\lambda_1+\lambda_2)\mathcal{L}(\widetilde{\psi}(u);s)+\lambda_1\mathcal{L}(\widetilde{\psi}(u);s)\sum_{k=1}^{K}e^{sc_k}\pi_k\\
  &&-\lambda_1\sum_{k=1}^{K}\pi_ke^{sc_k}R_{\widetilde{\psi}}(c_k,s)+\lambda_2\mathcal{L}(\widetilde{\psi}(u);s)\mathcal{L}(f_X(u);u,s),
\end{eqnarray*}
where  $R_{\widetilde{\psi}}(c,s)$ is $\int_0^c (1-\psi(u))e^{-su}\,du.$
We then have
$$\mathcal{L}(\widetilde{\psi}(u);s)= \frac{N_1(s)}{D(s)} $$
where $$N_1(s):=\lambda_1\sum_{k=1}^{K}\pi_ke^{sc_k}R_{1-\psi}(c_k,s)$$ and
$$D(s):=-\lambda_1-\lambda_2+\lambda_1\sum_{k=1}^{K}\pi_ke^{sc_k}+\lambda_2\mathcal{L}(f_X(u),u,s).$$
The Laplace transform of the ruin probability is then given by  $\mathcal{L}({\psi}(u);s)=\frac{1}{s}-\mathcal{L}({\widetilde{\psi}}(u);s)=\frac{1}{s}-\frac{N_1(s)}{D(s)},$
and subtracting these fractions gives the claimed result.
\end{proof}

It is well known that certain aspects of the theory of characteristic functions are advanced by extending a characteristic function to an entire (or meromorphic) function in the complex plane (Lukacs, 1987). In an entirely analogous way, we may regard the Laplace transform of the random premium density function as an entire (or meromorphic) function in the complex plane. We will show that the expression $\frac{N_{\psi}(s)}{sD(s)}$   defined in Theorem \ref{BMS-Rongming-equation} is then a meromorphic function.
We say that a function defined on some part of the complex plane extends to a meromorphic function on the complex plane if there exists a meromorphic function that concides with the given function on the domain of definition of that function. We now give a simple criterion for the Laplace transform of the ruin probability to extend in this way.
We also show that, despite the apparent zero at the origin  in the denominator of $\frac{N_{\psi}(s)}{sD(s)},$ there is in fact no pole at the origin because of a cancellation, and we show that the poles are located in the open left half plane.
\begin{theorem}
\label{negative-Solution} Consider the Bonus--Malus system \eqref{surplus-process-BMS}.
The Laplace transform of the ruin probability extends to a meromorphic function on the complex plane if the moment function associated with the random claim $X$
extends to a meromorphic function on the complex plane.  Moreover, all the poles of $\frac{N_{\psi}(s)}{sD(s)}$ are in the open left half-plane,
and if there is a pole at $-a_k+ib_k$  then there is also a pole at  $-a_k-ib_k.$
\end{theorem}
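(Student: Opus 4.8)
The plan is to establish the three assertions in turn, using the explicit expressions for $D(s)$ and $N_\psi(s)$ from Theorem~\ref{BMS-Rongming-equation}. For the meromorphic extension I would first note that $\mathcal{L}(f_X(u);u,s)=\int_0^\infty e^{-su}f_X(u)\,du$ is, up to the reflection $s\mapsto -s$, the moment function of $X$, so by hypothesis this factor extends to a meromorphic function on $\mathbb{C}$. The remaining ingredient of $D(s)$, namely $\sum_{k=1}^K\pi_k e^{sc_k}$, is a finite sum of exponentials and hence entire; therefore $D$ is meromorphic. In $N_\psi(s)=D(s)-s\lambda_1\sum_{k=1}^K\pi_k e^{sc_k}R_{1-\psi}(c_k,s)$ the only non-elementary factors are the $R_{1-\psi}(c_k,s)=\int_0^{c_k}(1-\psi(u))e^{-su}\,du$, each an integral of the bounded integrand $(1-\psi(u))e^{-su}$ over the \emph{compact} interval $[0,c_k]$; differentiation under the integral sign (or Morera's theorem) shows that each is entire. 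Hence $N_\psi$ is meromorphic, and so is the quotient $F(s):=N_\psi(s)/(sD(s))$, which is the first claim.

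Next I would localize the poles. Since $0\le\psi\le1$, the Laplace transform $\mathcal{L}(\psi)(s)=\int_0^\infty\psi(u)e^{-su}\,du$ converges and is analytic on the open right half-plane, where it agrees with $F$; thus $F$ has no pole with $\Re s>0$. At the origin one checks that $D(0)=-\lambda_1-\lambda_2+\lambda_1\sum_k\pi_k+\lambda_2\int_0^\infty f_X(u)\,du=0$, using $\sum_k\pi_k=1$ and $\int_0^\infty f_X(u)\,du=1$, so that $sD(s)$ vanishes at $s=0$. However, under the standing net-profit assumption the ruin probability decays, so $\psi\in L^1(0,\infty)$ and $F(t)=\mathcal{L}(\psi)(t)\to\int_0^\infty\psi(u)\,du<\infty$ as $t\to0^+$. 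A meromorphic function that stays bounded along a ray approaching a point cannot have a pole there, so the apparent singularity at the origin is removable; this is the cancellation announced before the theorem.

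It remains to exclude poles on the imaginary axis away from the origin, which I expect to be the main obstacle. Writing $s=ib$ with $b\ne0$, a pole can only come from a zero of $D$, so it suffices to show $D(ib)\ne0$. Here I would use that $\bigl|\sum_k\pi_k e^{ibc_k}\bigr|\le\sum_k\pi_k=1$ and that, because $X$ is absolutely continuous, its characteristic value $\mathcal{L}(f_X)(ib)=\int_0^\infty e^{-ibu}f_X(u)\,du$ has modulus \emph{strictly} less than $1$ for every $b\ne0$ (the equality case of the triangle inequality for integrals fails for a genuine density). By the triangle inequality,
\[
\bigl|\lambda_1\textstyle\sum_k\pi_k e^{ibc_k}+\lambda_2\mathcal{L}(f_X)(ib)\bigr|\le\lambda_1\bigl|\textstyle\sum_k\pi_k e^{ibc_k}\bigr|+\lambda_2\bigl|\mathcal{L}(f_X)(ib)\bigr|<\lambda_1+\lambda_2,
\]
so the equation $\lambda_1\sum_k\pi_k e^{sc_k}+\lambda_2\mathcal{L}(f_X)(s)=\lambda_1+\lambda_2$ defining a zero of $D$ has no solution with $s=ib$, $b\ne0$. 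Combined with the previous paragraph, every pole of $F$ then lies in the open left half-plane.

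Finally, for the conjugate-pair statement I would invoke the reflection symmetry coming from the fact that all the data are real. Since $\lambda_1,\lambda_2,\pi_k,c_k$ are real and $f_X,\psi$ are real-valued, each building block $h$ satisfies $\overline{h(s)}=h(\bar s)$; in particular $\overline{D(s)}=D(\bar s)$ and $\overline{N_\psi(s)}=N_\psi(\bar s)$, whence $\overline{F(s)}=F(\bar s)$. Therefore $s_0$ is a pole of $F$ if and only if $\bar s_0$ is, so a pole at $-a_k+ib_k$ is always accompanied by one at $-a_k-ib_k$. The delicate point throughout is the strict modulus bound on the imaginary axis, which genuinely uses the absolute continuity of $X$ and would fail for a lattice claim distribution; the removability at the origin likewise relies on the integrability, hence exponential decay, of $\psi$, which should be recorded as a net-profit hypothesis.
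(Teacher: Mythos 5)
Your proposal is correct in substance and follows the same skeleton as the paper (meromorphicity from the explicit formula, then localization of poles, then Schwarz reflection for conjugate pairs), but two of your key steps take a genuinely different route. For the entireness of $R_{1-\psi}(c_k,s)$ the paper invokes the Paley--Wiener theorem (Lemma \ref{Laplace-Exponential-Type}), whereas you use the elementary observation that the integral is over a compact interval, so differentiation under the integral sign or Morera suffices; this is more elementary and loses nothing. The real divergence is on the imaginary axis: the paper never examines $D$ there, but argues softly that the meromorphic extension is dominated in modulus by its values on the positive real axis ($g(x)\geq|g(x+iy)|$), hence bounded in the open right half-plane once boundedness at the origin is established, and a meromorphic function bounded on one side of the axis cannot have a pole on it. You instead prove $D(ib)\neq0$ for $b\neq0$ outright, via the strict inequality $|\mathcal{L}(f_X)(ib)|<1$, valid because $X$ is absolutely continuous. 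Your argument is more concrete and pinpoints exactly where the continuity of $X$ enters (it would fail for a lattice claim distribution); the paper's argument is softer and does not need absolute continuity for this step. Two caveats. First, your imaginary-axis argument presupposes that poles of $N_\psi(s)/(sD(s))$ can only occur at zeros of $sD(s)$; this is not automatic from ``$N_\psi$ and $D$ are meromorphic,'' since $N_\psi$ inherits the poles of $D$. It follows from the decomposition $N_\psi(s)/(sD(s))=\tfrac1s-N_1(s)/D(s)$ with $N_1(s)=\lambda_1\sum_k\pi_ke^{sc_k}R_{1-\psi}(c_k,s)$ entire (the paper records this in Corollary \ref{cor:Lundberg1} and Lemma \ref{lem:poles.at.zeros.of.Ds}), so you should state it explicitly. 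Second, at the origin the paper applies the terminal value theorem, which needs only $\psi(u)\to0$ as $u\to\infty$, whereas your dominated-convergence argument needs $\psi\in L^1(0,\infty)$, a strictly stronger net-profit-type hypothesis; you flag this honestly, but the paper's route avoids assuming it.
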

\begin{proof} By Theorem \ref{BMS-Rongming-equation}, the Laplace transform of the ruin probability is $\frac{N_{\psi}(s)}{sD(s)}$ where $D(s):=-\lambda_1-\lambda_2+\lambda_1\sum_{k=1}^{K}\pi_ke^{sc_k}+\lambda_2\mathcal{L}(f_X(u),u,s),$ and $N_{\psi}(s):=D(s)-s\lambda_1\sum_{k=1}^{K}\pi_ke^{sc_k}R_{1-\psi}(c_k,s).$ Lemma \ref{Laplace-Exponential-Type} in the Appendix uses the Paley-Wiener theorem to show that the functions $R_{1-\psi}(c_k,s)$ are entire. In view of the relationship between complex  moment functions and Laplace transforms, the hypothesis implies that $\mathcal{L}(f_X(u),u,s)$ extends to a meromorphic function. Thus, all the functions appearing in the definition of $N_{\psi}(s)$ and $D(s)$ are, in effect, either entire or meromorphic. Thus, their ratio $\frac{N_{\psi}(s)}{sD(s)}$ is again a meromorphic function. This proves the claim that the Laplace transform of the ruin probability extends to a meromorphic function on the complex plane if the moment function associated with the random claim $X$
extends to a meromorphic function on the complex plane.

We now consider the poles of  $\frac{N_{\psi}(s)}{sD(s)}$.
The terminal value theorem for Laplace transforms (Schiff, 1999, Theorem 2.36) states that if a function $g$ on the real line is bounded and has piecewise continuous derivative,
then its Laplace transform $G$ satisfies $\displaystyle\lim_{s\longrightarrow 0+} sG(s) = \lim_{u\longrightarrow\infty}{g}(u)$ whenever the second limit exists.
The terminal value theorem implies that  $\displaystyle\lim_{s\longrightarrow 0+} \frac{N_{\psi}(s)}{D(s)} = \lim_{u\longrightarrow\infty}{\psi}(u).$ Since ${\psi}$ is a ruin probability, the second limit exists and equals 0.  Since $\frac{N_{\psi}(s)}{D(s)}$ is meromorphic, we can conclude that there is at least a first order zero at the origin, and thus, dividing by $s$, the meromorphic function $\frac{N_{\psi}(s)}{sD(s)}$ is bounded at the origin.

Consider the Laplace integral
 $$g(s)=\int_0^\infty e^{-su}\psi(u)\,du.$$
Just from the fact that $\psi$ is bounded and non-negative, this integral will converge absolutely for each $s>0.$
Recall that $\frac{N_{\psi}(s)}{sD(s)}$ is the analytic continuation of the Laplace transform of $\psi(u)$. Thus, $\frac{N_{\psi}(s)}{sD(s)}$ is equal to the Laplace transform of $\psi(u)$ on at least the positive real line $(0,\infty).$
Since the function $\frac{N_{\psi}(s)}{sD(s)}$ is bounded at $s=0,$ and since $\psi(u)$ is positive or zero everywhere on the positive real line, it follows that the Laplace transform of $\psi$ is a bounded and non-increasing function everywhere on the positive real line. In fact, from the fact that $\psi(u)$ is non-negative, we have $|e^{-(x+iy)u}\psi(u)|=e^{-xu}\psi(u),$ where $x$  and $y$ are real and $u$ is positive. But then, the above Laplace integral converges absolutely for all complex $s$ in the open right half-plane, and there satisfies the inequality $g(x)\geq |g(x+iy)|.$ 

Thus, extending $g(s)$ to a meromorphic function on the complex plane, we see that this function cannot have poles in the open right half plane. Furthermore, a meromorphic function that has a bound everywhere in the open right half plane will not have poles on the imaginary axis.
 We conclude that the meromorphic function $\frac{N_{\psi}(s)}{sD(s)}$  has no poles at any $z\in\mathbb C$ with non-negative real part.

 Since $\psi$ is real-valued,
 it follows that (the analytic continuation of) its Laplace transform is real everywhere on the real line, except at poles,
 and by the Schwartz reflection principle,
 \[\overline{\left(\frac{N_{\psi}(s)}{sD(s)}\right)}=\frac{N_{\psi}(\overline{s})}{\overline{s}D(\overline{s})}.\]
We have thus proven that the poles are located at  $z_j:= -a_j \pm
ib_j,$ with $a_j>0.$
\end{proof}

\begin{proposition} If the moment function associated with $X$ extends to a meromorphic function, and if there is a pole or poles on the real axis, then the rightmost such pole is not to the left of any of the complex poles.\label{prop:real.pole}\end{proposition}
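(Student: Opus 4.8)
The plan is to locate the real pole nearest the imaginary axis explicitly, and then to show by a triangle-inequality estimate that every complex pole lies weakly to its left. By Theorem \ref{negative-Solution} the poles of $\frac{N_{\psi}(s)}{sD(s)}$ are among the zeros of $D$ in the open left half-plane (a pole of $\mathcal{L}(f_X(u),u,s)$ is not a pole of the ratio, since $N_{\psi}$ and $D$ share that singularity and one checks the quotient tends to $1/s$ there). So it suffices to study the zeros of $D$. First I would restrict $D$ to the real axis and write $D(x)=H(x)-(\lambda_1+\lambda_2)$, where $H(x):=\lambda_1\sum_{k=1}^{K}\pi_k e^{xc_k}+\lambda_2\mathcal{L}(f_X(u),u,x)$. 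Each $e^{xc_k}$ is convex, and on its interval of convergence $\mathcal{L}(f_X(u),u,x)=\int_0^\infty e^{-xu}f_X(u)\,du$ is convex, being an average of the convex functions $x\mapsto e^{-xu}$; since $\pi_k\ge 0$ and $\lambda_i>0$, the function $D$ is convex on the real interval of convergence. Moreover $D(0)=0$, because $\sum_k\pi_k=1$ and $f_X$ integrates to $1$.

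Next I would pin down the rightmost negative real zero. The net-profit condition, which is equivalent to the hypothesis $\lim_{u\to\infty}\psi(u)=0$ already used in Theorem \ref{negative-Solution}, gives $D'(0)=\lambda_1 E(C)-\lambda_2 E(X)>0$; together with $D(0)=0$ and convexity this forces $D<0$ immediately to the left of the origin, while $D\to+\infty$ as $x$ decreases to the left endpoint of the interval of convergence (by monotone convergence). Convexity then yields exactly one zero $-R<0$ in this interval, with $D<0$ on $(-R,0)$, and $D>0$ for $x>0$; hence $-R$ is the rightmost real zero of $D$ in the open left half-plane. It remains to verify that $-R$ is not cancelled: evaluating $N_{\psi}(-R)=R\lambda_1\sum_k\pi_k e^{-Rc_k}R_{1-\psi}(c_k,-R)$ and using $1-\psi\ge 0$, so that each $R_{1-\psi}(c_k,-R)=\int_0^{c_k}(1-\psi(u))e^{Ru}\,du>0$ unless ruin is certain, shows $N_{\psi}(-R)>0$. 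Thus $-R$ is a genuine pole and is the rightmost real pole.

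Then I would handle the complex poles. Let $s_0=-a\pm ib$ with $b\neq 0$ be a complex zero of $D$. From $D(s_0)=0$, that is $\lambda_1\sum_k\pi_k e^{s_0 c_k}+\lambda_2\mathcal{L}(f_X(u),u,s_0)=\lambda_1+\lambda_2$, taking moduli and using $|e^{s_0 c_k}|=e^{-ac_k}$, $f_X\ge 0$ and $\pi_k\ge 0$ gives $\lambda_1+\lambda_2\le \lambda_1\sum_k\pi_k e^{-ac_k}+\lambda_2\mathcal{L}(f_X(u),u,-a)$, i.e. $D(-a)\ge 0$, provided $-a$ lies in the real interval of convergence. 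Since $D<0$ on $(-R,0)$ and $-a<0$, this forces $-a\le -R$. If instead $-a$ lies to the left of the interval of convergence, then $-a$ is already to the left of $-R$ and the same conclusion holds trivially. Combining with the previous paragraph, every complex pole has real part at most $-R$, the real part of the rightmost real pole, which is the assertion.

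The main obstacle I anticipate is the bookkeeping around the meromorphic continuation: the modulus estimate is only literally valid inside the abscissa of convergence of the claim-size transform, so I must treat separately (and trivially) those complex poles whose real part falls beyond it, and I must make sure the convexity-based description of the real zeros of $D$ refers to the genuine integral on its interval of convergence rather than to its meromorphic continuation. A secondary point is justifying the net-profit inequality $D'(0)>0$; this is precisely the assumption that makes $\psi$ a genuine ruin probability with $\lim_{u\to\infty}\psi(u)=0$, as already invoked in Theorem \ref{negative-Solution}.
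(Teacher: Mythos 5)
Your argument reaches the right conclusion but by a genuinely different route than the paper, and one step of it is not justified as written. The paper never analyzes $D(s)$ at all: it applies the positivity principle directly to $g(s)=\int_0^\infty e^{-su}\psi(u)\,du$, observing that wherever this integral converges at a real point $a$ it also converges absolutely on the whole vertical line $\Re(s)=a$ with $|g(a+ib)|\le g(a)$, so the meromorphic extension has no pole on that line; letting $a$ decrease until $g$ blows up locates the rightmost real pole $-r$ and forces every pole into $\Re(s)\le -r$. You apply the same positivity principle instead to $D$ (your modulus estimate $D(-a)\ge 0$ at a complex zero $-a+ib$) and combine it with convexity of $D$ on the real axis to identify the rightmost real pole as the unique real zero $-R$ of $D$ inside the region of convergence. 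Your route is longer but buys more: it locates the pole constructively, verifies via $N_\psi(-R)>0$ that the zero of $D$ is not cancelled by the numerator (a point the paper never addresses), and in effect shows that the exponent in Corollary \ref{cor:Lundberg1} is attained, i.e.\ that the Lundberg-type bound is essentially sharp. Your importation of the net-profit condition $D'(0)>0$ is acceptable: it is exactly the standing assumption $\lim_{u\to\infty}\psi(u)=0$ used in the proof of Theorem \ref{negative-Solution}, and when it fails one has $\psi\equiv 1$, whose transform $1/s$ has no poles, so the proposition is vacuous.

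The gap is the sentence ``$D\to+\infty$ as $x$ decreases to the left endpoint of the interval of convergence (by monotone convergence).'' Monotone convergence gives only $\lim_{x\downarrow\sigma_c}\mathcal{L}(f_X(u);u,x)=\int_0^\infty e^{-\sigma_c u}f_X(u)\,du$, and this limit can be finite even though the integral diverges for every $x<\sigma_c$: for $f_X(u)\propto e^{-u}(1+u)^{-3}$ one has $\sigma_c=-1$ and the transform stays bounded on $[-1,\infty)$. In such a situation, if in addition $D<0$ on all of $[\sigma_c,0)$ (which occurs when $\lambda_2$ is small relative to $\lambda_1$), your own triangle inequality shows there is no zero of $D$, real or complex, with real part in $[\sigma_c,0)$; every pole then lies in the region $\Re(s)<\sigma_c$, where neither the convexity argument nor the modulus estimate applies, and there is no $-R$ to compare against --- the proof has nothing to say. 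What rescues the step is precisely the meromorphy hypothesis together with Landau's theorem for Laplace integrals with non-negative integrands: a finite abscissa of convergence $\sigma_c$ is necessarily a singular point of the analytic continuation, so under meromorphy there must be a pole at $\sigma_c$, which is incompatible with the transform remaining bounded as $x\downarrow\sigma_c$; hence the integral does diverge at $\sigma_c$ and your monotone-convergence conclusion follows. (The same theorem is what guarantees $\sigma_c<0$, i.e.\ that your real interval of convergence extends to the left of the origin at all; note my example above has a branch point at $\sigma_c$, so it violates meromorphy, exactly as it must.) Cite that theorem and the argument closes; monotone convergence alone does not.
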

\begin{proof}
Consider the Laplace integral
 $$g(s)=\int_0^\infty e^{-su}\psi(u)\,du.$$
Clearly, for real values of $s$ such that the integral converges, $g(s)$ is a nonincreasing real-valued function. Theorem \ref{negative-Solution} insures that $g(s)$ has a meromorphic extension to the complex plane.
 If we consider how  the nonincreasing function $g(a)=\int_0^\infty e^{-au}\psi(u)\,du$ behaves as $a$ moves to the left along the real axis, we see that either the function increases to infinity, or it remains bounded. If it goes to infinity, then at that point we have a pole. Labelling this pole by $z=-r,$ and supposing that $a$ is a real number that lies to the right of this pole, it follows that $g(a)=\int_0^\infty e^{-au}\psi(u)\,du$ converges absolutely. As in the proof  of  Theorem \ref{negative-Solution}, since  $\psi(u)$ is non-negative, we have $|e^{(a+ib)u}\psi(u)|=e^{au}\psi(u),$ where $x$  and $y$ are real and $u$ is positive. But then,  $g(a)\geq |g(a+ib)|.$ This means that the Laplace transform of the ruin probability cannot have a pole on the vertical line $\Re(z)=a.$ But $a$ was an arbitrary real number greater than  $-r.$  This proves the claim that all the poles of the Laplace transform
lie in the closed left half-plane defined by $\Re(s)\leq -r.$
\end{proof}

We now give a corollary that is very effective at establishing Lundberg-type inequalities, and makes evident a natural lower bound on the exponent that can be used in such an inequality. We should mention that most of the common distributions do have moment functions that extend to meromorphic functions, and that this is a property that is easy to check in specific cases. See Table \ref{table:moment.functions} on page \pageref{table:moment.functions} for a small list of such distributions.

\begin{corollary}\label{cor:Lundberg1} In the  Bonus-Malus problem \eqref{surplus-process-BMS}, if the moment function associated with $X$ extends to a meromorphic function,
and if $-r_0$ lies to the right of any strictly negative real zero of $$D(s):=-\lambda_1-\lambda_2+\lambda_1\sum_{k=1}^{K}\pi_ke^{sc_k}+\lambda_2\mathcal{L}(f_X(u),u,s),$$  then the ruin probability is bounded by $A\exp(-r_0 u).$
\end{corollary}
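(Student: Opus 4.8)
The plan is to derive the Lundberg-type bound directly from the pole structure of the meromorphic Laplace transform established in Theorem \ref{negative-Solution} and Proposition \ref{prop:real.pole}. The Laplace transform of $\psi$ is $\frac{N_{\psi}(s)}{sD(s)}$, and its poles occur (apart from the cancelled apparent pole at the origin) precisely at the zeros of $D(s)$, since $N_\psi$ is built from entire functions. By Theorem \ref{negative-Solution} all poles lie in the open left half-plane, so the rightmost pole has strictly negative real part. The hypothesis that $-r_0$ lies strictly to the right of every strictly negative real zero of $D(s)$, combined with Proposition \ref{prop:real.pole} (which guarantees that the rightmost real pole is no further left than any complex pole), means that $-r_0$ lies strictly to the right of the real part of every pole of $\frac{N_{\psi}(s)}{sD(s)}$. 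Hence the function $\frac{N_{\psi}(s)}{sD(s)}$ is analytic on the closed half-plane $\Re(s) \geq -r_0$.

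First I would recall the general principle that the exponential decay rate of a function is controlled by the abscissa of the rightmost singularity of its Laplace transform. Concretely, one wants to show that if the Laplace transform $g(s)$ of $\psi$ extends analytically to the half-plane $\Re(s) > -r_0$, then $\psi(u) = O(e^{-r_0 u})$. The cleanest route is via Laplace inversion: writing $\psi(u) = \frac{1}{2\pi i}\int_{\gamma - i\infty}^{\gamma + i\infty} e^{su} g(s)\,ds$ for some $\gamma > 0$, I would shift the contour of integration leftward to the vertical line $\Re(s) = -r_0 + \epsilon$ for small $\epsilon > 0$. Since there are no poles crossed in this shift (the rightmost pole lies strictly to the left of $-r_0$), no residue contributions are picked up, and on the shifted contour the factor $|e^{su}| = e^{(-r_0+\epsilon)u}$ supplies the decay. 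Collecting the constant $A$ from the bound on $\int |g(-r_0+\epsilon + it)|\,dt$ along the shifted line then gives $|\psi(u)| \leq A\exp(-(r_0 - \epsilon)u)$, and absorbing $\epsilon$ (or arguing directly on the line $\Re(s) = -r_0$, which contains no poles by hypothesis) yields the stated bound $A\exp(-r_0 u)$.

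The main obstacle is the justification of the contour shift: one must control the behaviour of $g(s) = \frac{N_{\psi}(s)}{sD(s)}$ as $|\Im(s)| \to \infty$ along the horizontal connecting segments and along the shifted vertical contour, so that the arcs at infinity contribute nothing and the integral on the shifted line converges. This requires a decay estimate on $\frac{N_{\psi}(s)}{sD(s)}$ in the relevant strip. Here I would lean on the structure of $D(s)$: the term $\lambda_2 \mathcal{L}(f_X(u),u,s)$ decays as $\Re(s) \to +\infty$ and, under the meromorphicity hypothesis, is controlled in the strip, while the presence of the factor $s$ in the denominator and the boundedness properties of $g$ on vertical lines established in the proof of Theorem \ref{negative-Solution} (namely $g(x) \geq |g(x+iy)|$ on the right half-plane, extended by analyticity) provide the needed integrability. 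An alternative that sidesteps delicate inversion estimates is a Tauberian or comparison argument: since $g$ is analytic up to $\Re(s) = -r_0$ and $\psi \geq 0$, a Pringsheim-type observation shows the abscissa of convergence of the Laplace integral equals the rightmost real singularity, forcing $e^{r_0 u}\psi(u)$ to have a convergent Laplace integral and hence subexponential growth, which after a modest argument delivers the exponential bound with the claimed rate.
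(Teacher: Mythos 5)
Your primary route --- Bromwich inversion followed by a leftward contour shift --- contains a genuine gap, and it is exactly the one you flag yourself: the estimates needed to justify the shift are not available under the corollary's hypotheses. To discard the connecting arcs and make the integral along $\Re(s)=-r_0+\epsilon$ converge, you need quantitative control of $N_\psi(s)/(sD(s))$ as $|\Im(s)|\to\infty$; the $1/|s|$ factor alone is not absolutely integrable, and $D(s)$ contains the oscillating sum $\lambda_1\sum_k\pi_k e^{sc_k}$, whose modulus along a vertical line in the left half-plane admits no useful lower bound in general (near-zeros of $D$ can approach the line even when no zero lies on it). Nothing in the hypotheses of Corollary \ref{cor:Lundberg1} rules this out; this is precisely why the paper, when it does perform contour-closing arguments (Theorem \ref{Inv-Laplace-th:1}, Corollary \ref{meromorphic.of.order.zero}), imposes the extra ``meromorphic of order zero'' condition with its adapted contours $\Gamma_n$ --- an assumption deliberately absent from this corollary. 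A second, smaller gap: you invoke Proposition \ref{prop:real.pole} to conclude that every pole has real part $<-r_0$, but that proposition only says something \emph{if a real pole exists}; when $D$ has no strictly negative real zeros, it places no constraint on the complex poles, so your claim that the transform is analytic on $\Re(s)\ge -r_0$ is not yet justified.

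Your closing ``alternative,'' by contrast, is exactly the paper's proof, and it quietly repairs both gaps. The paper argues: the transform equals $\frac1s-\frac{N_1(s)}{D(s)}$ with $N_1(s)=\lambda_1\sum_k\pi_k e^{sc_k}R_{1-\psi}(c_k,s)$ entire (Lemma \ref{Laplace-Exponential-Type}), so poles can only come from zeros of $D$, the one at the origin cancelling; then, since $\psi\ge 0$, the Landau--Pringsheim mechanism from the proof of Proposition \ref{prop:real.pole} shows that $\int_0^\infty e^{r_0u}\psi(u)\,du$ converges absolutely provided no real pole is met on the way, i.e.\ under the stated hypothesis. Note that absolute convergence at a real abscissa automatically excludes poles --- real or complex --- on and to the right of that vertical line, which is what disposes of the no-real-pole edge case without ever citing Proposition \ref{prop:real.pole} itself. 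Finally, from $e^{r_0u}\psi(u)\in L^1(\mathbb{R}^+)$ and continuity the paper extracts the bound $\psi(u)\le Ae^{-r_0u}$. (If you want that last step airtight, use monotonicity of the ruin probability in the initial wealth: $\psi(u)\,(e^{r_0u}-1)/r_0\le\int_0^u e^{r_0t}\psi(t)\,dt\le M$ gives the exponential bound for $u$ bounded away from $0$, and $\psi\le 1$ handles the rest.) So the fix is organizational rather than conceptual: promote your final paragraph to the main argument and discard the contour shift.
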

\begin{proof} Theorem \ref{negative-Solution} shows that the Laplace transform of the ruin probability extends to a meromorphic function. Theorem \ref{BMS-Rongming-equation}   gives an  explicit form for this Laplace transform: $${\mathcal L}(\psi(t),t,s)=\frac{1}{s}-\frac{N_1(s)}{D(s)},$$ where $N_1(s):=\lambda_1\sum_{k=1}^{K}\pi_ke^{sc_k}R_{1-\psi}(c_k,s)$ is entire by Lemma \ref{Laplace-Exponential-Type}. Thus, the only possible poles
of the (meromorphic extension of) the Laplace transform of the ruin probability are those coming from the  zeros of $D(s).$ The function $D(s)$ always has a zero at the origin, but the resulting pole cancels with the $\frac{1}{s}$ term that is displayed above, and thus does not need to be taken into account. If $-r_0$ lies to the right of all negative real roots
 of $D(s)$ then as in the proof of Proposition \ref{prop:real.pole}  the integral  $\int_0^\infty e^{r_0 u}\psi(u)\,du$ converges absolutely. But then the continuous function $e^{r_0 u}\psi(u)$ is in $L^1(\mathbb R^{+}).$ A continuous function that is in $L^1(\mathbb R^{+})$ has an upper bound, $A,$   and evidently $Ae^{-r_0 u}\geq \psi(u).$ This proves the Corollary.
\end{proof}

\begin{example}\label{ex0}
Consider a steady-state Bonus--Malus system with 10 levels, premiums rate ${C}$  and steady-state
distributions, ${ \pi},$ given by $$C=(0.4, 0.8, 1, 1.2, 1.4, 1.5, 1.7, 1.8, 2, 2.2)$$ and   $\pi_n=0.1.$ Take  $X$ to be a folded normal distribution with probability density function $\sim e^{-x^2}.$  We suppose that the number of sold contracts, $N_1(t),$ and the
number of arrived claims, $N_2(t),$ are two independent
processes with intensities $\lambda_1=18$ and $\lambda_2=11.$ We find that the function $D(s)$ for this system is
\begin{multline*}11  \erfc\left( \frac{s}{\sqrt2} \right) e^{\frac{1}{2} s^2}
 +1.8 ( e^{.4  s} +  e^{.8  s}+ e^s  +  e^{1.2  s} +  e^{1.4  s} \\+ e^{1.5  s} +  e^{1.7  s} +e^{1.8 s}   + e^{2 s}+e^{2.2s}) -29.\\ \end{multline*}
As expected, this function is meromorphic (in fact, entire). On the real line, this function $D(s)$ has roots only at $0$ and at $-0.7279947.$ Thus we obtain a bound for the ruin probability of the form $\psi(u)\leq A e^{-0.727994u}.$ We will later re-examine this Bonus-Malus system (see Example \ref{ex2}), and it will then become apparent that we can take $A=\psi(0),$ and that
$\psi(0)$ is approximately $0.6429219.$
\end{example}

\section{Necessary and sufficient conditions for closed form solutions, and existence of infinite series solutions}

In this section we are primarily concerned with existence results. This means that we shall show that there exist solutions of various functional forms, without necessarily being able to explicitly find the coefficients. The question of finding the coefficients is addressed in a later section.

We recall the definition of exponential type $T$ functions on the complex plane.
\begin{definition}
\label{exponential-type} A function $f$ is said to be of exponential type $T$ if there are positive constants $M$ and $T$ such that
$|f(\omega)|\leq M\exp\{T|\omega|\},$ for all $\omega$ in the complex plane.
\end{definition}

\begin{theorem} \label{th:characterize.finite.sums}
Let $\psi(u)$ be the ruin probability of the Bonus-Malus problem \eqref{surplus-process-BMS}. Let us suppose that the moment function associated with the distribution $X$ extends to a meromorphic function on the complex plane.
Then, the following are equivalent:
\begin{enumerate}\item The ruin probability is given by a finite sum of the form
\begin{equation*}
 \psi(u) =  A_0(u) e^{-a_0 u} +  \sum  A_i(u)
e^{-a_i u} \cos b_i u + B_i(u) e^{-a_i u} \sin b_i u,
\end{equation*} where the $A_i(u)$ and $B_i(u)$ are polynomials.
\item the ruin probability is given by an entire function of exponential type.
\end{enumerate}
Both of the above will be true whenever the function $D(s)$ defined in Theorem \ref{BMS-Rongming-equation} has finitely many roots having strictly negative real part.
\end{theorem}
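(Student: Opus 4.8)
The plan is to reduce the entire statement to counting the poles of the meromorphic function $\mathcal{L}(\psi)(s)=\frac{N_\psi(s)}{sD(s)}$ furnished by Theorems \ref{BMS-Rongming-equation} and \ref{negative-Solution}, and to exploit the fact, established in Theorem \ref{negative-Solution}, that these poles lie in the open left half-plane and occur in conjugate pairs $-a_j\pm ib_j$, together with possibly a real pole $-a_0$. The implication $(1)\Rightarrow(2)$ is immediate and I would dispose of it first: a single summand such as $A_i(u)e^{-a_iu}\cos b_i u$ is a polynomial multiple of $\tfrac12\bigl(e^{(-a_i+ib_i)u}+e^{(-a_i-ib_i)u}\bigr)$, which is entire of exponential type $\sqrt{a_i^2+b_i^2}$ in the sense of Definition \ref{exponential-type}; a finite sum of entire functions of exponential type is again entire of exponential type, with type the maximum of the summands.

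The substantive direction is $(2)\Rightarrow(1)$. Here the key observation is that, for $\psi$ entire of exponential type $T$, the Laplace integral $\int_0^\infty\psi(u)e^{-su}\,du$ is exactly the Borel transform of $\psi$. By the classical P\'olya--Boas theory of the Borel transform, this transform is holomorphic on the complement of the conjugate indicator diagram of $\psi$, a compact convex set contained in the disc $\{|s|\le T\}$, and it tends to $0$ as $|s|\to\infty$ in every direction. Since by Theorem \ref{negative-Solution} the analytic continuation of this transform is the meromorphic function $\mathcal{L}(\psi)$, it follows that every pole of $\mathcal{L}(\psi)$ must lie in the bounded set $\{|s|\le T\}$. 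A function meromorphic on all of $\mathbb{C}$ has isolated poles with no finite accumulation point, so only finitely many poles can fall inside a bounded set; hence $\mathcal{L}(\psi)$ has only finitely many poles.

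To pass from finitely many poles to the closed form $(1)$, I would subtract from $\mathcal{L}(\psi)$ its finitely many principal parts $\sum_j\tilde P_j(s)$, each a rational function of the form $\sum_k c_{j,k}(s-s_j)^{-k}$. The inverse Laplace transform of such a principal part at a complex pole $-a_j\pm ib_j$ is precisely a term $A_j(u)e^{-a_ju}\cos b_j u+B_j(u)e^{-a_ju}\sin b_j u$ with polynomial coefficients, the conjugate symmetry of the poles guaranteeing that the result is real, while the real pole contributes $A_0(u)e^{-a_0u}$; call the resulting finite sum $S(u)$. Then $\mathcal{L}(S)=\sum_j\tilde P_j$, so $\mathcal{L}(\psi)-\mathcal{L}(S)$ is entire, and since both $\mathcal{L}(\psi)$ (as the Borel transform) and the rational $\mathcal{L}(S)$ vanish at infinity in every direction, this entire difference is bounded and vanishing at infinity, hence identically zero by Liouville's theorem. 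Therefore $\psi=S$, which is exactly the form $(1)$.

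Finally, for the sufficient condition I would argue that if $D(s)$ has only finitely many zeros with strictly negative real part then $\mathcal{L}(\psi)$ again has only finitely many poles: by Theorem \ref{negative-Solution} every pole lies in the open left half-plane and, by the explicit form $\mathcal{L}(\psi)=\frac1s-\frac{N_1(s)}{D(s)}$ with $N_1$ entire (Lemma \ref{Laplace-Exponential-Type}), every such pole is a zero of $D$ with $\Re s<0$, the zero of $D$ at the origin cancelling against $\tfrac1s$. The inversion argument of the previous paragraph then yields $(1)$, and $(1)\Rightarrow(2)$ completes the chain. I expect the main obstacle to be the justification that the entire remainder $\mathcal{L}(\psi)-\mathcal{L}(S)$ vanishes: in the $(2)\Rightarrow(1)$ direction this is clean because the Borel transform decays at infinity in all directions, but in the purely pole-counting situation coming from the hypothesis on $D$ one must still control the growth of $N_1(s)/D(s)$ in the left half-plane, so that the Bromwich contour may be closed to the left with vanishing arc contribution; I would handle this by combining the right half-plane bound $|g(x+iy)|\le g(x)$ from the proof of Theorem \ref{negative-Solution} with a Phragm\'en--Lindel\"of estimate.
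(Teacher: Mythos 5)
Your proposal is correct and rests on the same mathematical engine as the paper's own proof: the Borel/P\'olya theory of entire functions of exponential type, which the paper invokes as Theorems 10.9a/b of Henrici (1991) together with Pincherle's inversion formula $\psi(u)=\frac{1}{2\pi i}\oint_\Gamma e^{us}\frac{N_\psi(s)}{sD(s)}\,ds$, and which you invoke in its P\'olya--Boas form (holomorphy outside the conjugate indicator diagram, decay at infinity). Both arguments then use meromorphy of the transform (Theorem \ref{negative-Solution}) to conclude there are only finitely many poles, all in the open left half-plane and in conjugate pairs. The difference is in the finish: the paper evaluates the Pincherle contour integral directly by Cauchy's residue theorem, reading off $\psi$ as the finite sum of residues, whereas you subtract the principal parts to form $S(u)$ and kill the entire remainder $\mathcal{L}(\psi)-\mathcal{L}(S)$ by Liouville's theorem; these are equivalent in substance, yours being the standard proof of the representation the paper cites. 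Where you genuinely improve on the paper is the final claim: the paper dismisses the case ``$D(s)$ has finitely many zeros with strictly negative real part'' with the phrase ``we then proceed as before,'' but ``before'' used hypothesis (2) to license Pincherle's formula, and without knowing a priori that $\psi$ is of exponential type one must separately justify the inversion --- precisely the growth control of $N_1(s)/D(s)$ in the left half-plane that you flag as the main obstacle. Your Phragm\'en--Lindel\"of suggestion is only a sketch (you would still need boundary growth estimates on rays to apply it), but identifying this gap is more scrupulous than the paper's own treatment, which silently assumes it away.
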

\begin{proof} If condition $2$ holds,  then by Theorems 10.9a/b in Henrici (1991), the Laplace transform of the ruin probability is holomorphic (meaning, analytic and everywhere continuous) outside and on a circle $\Gamma$ about the origin in the complex plane, and the ruin probability can be recovered  from its Laplace transform using Pincherle's inversion formula
\begin{eqnarray}
  \psi(u) &=& \frac{1}{2\pi i} \oint_\Gamma e^{us}
  \frac{N_{\psi}(s)}{sD(s)}ds,\label{eq:pincherle}
\end{eqnarray}
where $\Gamma$ is the abovementioned circle, with the standard counterclockwise orientation.

By Theorem \ref{negative-Solution}, the Laplace transform of the ruin probability, $\frac{N_\psi(u)}{sD(s)},$
  extends to a meromorphic function. Thus, the same is true of the integrand in \eqref{eq:pincherle}, and
a meromorphic function can only have finitely many poles within any compact region of the complex plane. Thus, there are only finitely many poles to consider, and Cauchy's residue theorem then shows that the above integral can be evaluated as a finite sum of the residues at its poles,
\[\psi(u):=\sum \Res\left( e^{us}
  \frac{N_{\psi} (s)}{sD(s)},s=z_k\right) .\]
    Simple poles contribute terms of the general form $\lambda e^{u z_k}$ where $\lambda$ is a complex number. At a pole of  order $m>1$, it follows  from the residue formula (Equation \eqref{eq:residue.formula} on page \pageref{eq:residue.formula}) that we obtain terms of the general form $P(u)e^{u z_k}$ where $P(u)$ is a polynomial of degree $m-1.$
Recalling that the poles can only occur at zeros of $D(s)$ with strictly negative real part,  and that these zeros have the structure described in
 Theorem \ref{negative-Solution} and Proposition \ref{prop:real.pole}, we can rewrite the resulting finite sum in the general form
$ \psi(u) = A_0(u) e^{-a_0 u} +\sum  A_i(u)
e^{-a_i u} \cos b_i u  + B_i(u) e^{-a_i u} \sin b_i u,$ showing that condition $1$ evidently holds.

The converse is straighforward, because we only need to check that a finite sum of the form   $A_0(u) e^{-a_0 u} +\sum  A_i(u)
e^{-a_i u} \cos b_i u  + B_i(u) e^{-a_i u} \sin b_i u,$ is an entire function of exponential type.

If we now assume that the function $D(s)$ has finitely many zeros with strictly negative real part, recall that by Theorem \ref{negative-Solution}, the poles of $\frac{N_\psi(u)}{sD(s)}$ can only occur at zeros of $D(s)$ that have strictly negative real part. Thus, if there are only finitely many such zeros, then there can only be finitely many poles, which are necessarily contained in some sufficiently large circle about the origin. We then proceed as before and obtain a finite sum over residues of $e^{us}\frac{N_\psi(u)}{sD(s)}.$
\end{proof}
It is interesting that the condition given in the above Theorem, namely that the ruin probability should extend to an entire function that grows no faster than some exponential, appears to be mild enough that one could expect this case to arise reasonable frequently, and in fact we had this case occur in two of the examples that appear later on.

It is natural to consider  generalizations to the case of infinite sums. Let us say that a function $f$ is \textit{meromorphic of order zero} if there exist an increasing sequence of contours $\Gamma_n$ that exhaust the plane and are such that the  sequence
$$\oint_{\Gamma_n} \left|\frac{f(z)}{z^{p+1}}\right||dz|$$ 
is bounded. This technical definition insures (Theorem 2.7, Markushevich, 1965) that such functions are determined by the principal parts of their poles (at least up to polynomials of degree $p,$ which can be neglected in our application). For example, the function $\tan z$ is meromorphic of order zero, and writing it in terms of the principal parts of the poles leads to the familiar 
expansion
$$\tan(z) = \sum_{k=0}^{\infty} \frac{-2z}{z^2 - (k + \frac{1}{2})^2\pi^2}.$$

Theorem \ref{Inv-Laplace-th:1} in the Appendix shows that if the Laplace transform of the ruin probability is meromorphic of order zero, then the conclusion of the above Theorem \ref{th:characterize.finite.sums} generalizes to infinite sums. Thus, in this case the probability can be written as an infinite sum of residues, and the convergence is evidently uniform. We now record two Corollaries of Theorem \ref{Inv-Laplace-th:1}:

\begin{corollary}
\label{meromorphic.of.order.zero} Let $\psi(u)$ be the ruin probability of the Bonus-Malus problem \eqref{surplus-process-BMS}. Let us suppose that the Laplace transform of the ruin probability is meromorphic of order zero.
Then,
\begin{eqnarray*}
  \psi(u) &= A_0(u) e^{-a_0 u} +  \displaystyle \sum_{z_i}  A_i(u)
e^{-a_i u} \cos b_i u + B_i(u) e^{-a_i u} \sin b_i u,
\end{eqnarray*} where the $A_i(u)$ and $B_i(u)$ are polynomials, and the $z_j$ are the zeros of $D(s)$ in the open left half-plane.
  \end{corollary}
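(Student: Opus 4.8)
The plan is to treat this as the infinite-sum analogue of Theorem~\ref{th:characterize.finite.sums}, with the genuinely hard analytic work delegated to Theorem~\ref{Inv-Laplace-th:1}. By hypothesis the Laplace transform $\frac{N_\psi(s)}{sD(s)}$ is meromorphic of order zero, so Theorem~\ref{Inv-Laplace-th:1} applies and tells us that $\psi(u)$ is recovered as the sum of the residues of $e^{us}\frac{N_\psi(s)}{sD(s)}$ taken over all of its poles, with the convergence being uniform. First I would recall, using Theorem~\ref{negative-Solution} and the cancellation noted in Corollary~\ref{cor:Lundberg1}, that the poles of the integrand occur exactly at the zeros of $D(s)$ in the open left half-plane: the apparent pole at the origin is removed by the $\frac{1}{s}$ cancellation, and Theorem~\ref{negative-Solution} guarantees there are no poles with non-negative real part. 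Thus the sum ranges precisely over the zeros $z_j$ of $D(s)$ with $\Re(z_j)<0$.

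Next I would identify the contribution of each pole exactly as in the proof of Theorem~\ref{th:characterize.finite.sums}. A simple pole at $z_k$ contributes a term $\lambda\,e^{u z_k}$ for some complex $\lambda$, and a pole of order $m>1$ contributes $P(u)e^{u z_k}$ with $P$ a polynomial of degree $m-1$, by the residue formula (Equation~\eqref{eq:residue.formula}). By Theorem~\ref{negative-Solution} the poles come in conjugate pairs $z_j=-a_j\pm i b_j$ with $a_j>0$, and the Schwartz reflection identity
\[
\overline{\left(\frac{N_{\psi}(s)}{sD(s)}\right)}=\frac{N_{\psi}(\overline{s})}{\overline{s}\,D(\overline{s})}
\]
established there shows that the residues at $-a_j+ib_j$ and $-a_j-ib_j$ are complex conjugates. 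Adding such a conjugate pair and applying Euler's formula collapses $P(u)e^{(-a_j+ib_j)u}+\overline{P}(u)e^{(-a_j-ib_j)u}$ into a real expression $A_j(u)e^{-a_j u}\cos b_j u + B_j(u)e^{-a_j u}\sin b_j u$ with real polynomial coefficients $A_j,B_j$ (of degree at most $m-1$). A pole lying on the negative real axis (the $b_j=0$ case) has a real residue and supplies the single real term $A_0(u)e^{-a_0 u}$. This is exactly the claimed form.

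The step I expect to require the most care is the regrouping of the infinite series of residues into conjugate pairs. Theorem~\ref{Inv-Laplace-th:1} delivers the sum of residues with uniform convergence, but the trigonometric form is obtained only after pairing each $-a_j+ib_j$ with its conjugate $-a_j-ib_j$; I would justify that this fixed pairing preserves the value and the uniform convergence of the sum, which follows because pairing adjacent conjugate terms is a regrouping by finite blocks and the partial sums already converge uniformly by Theorem~\ref{Inv-Laplace-th:1}. Once this is in hand, the reality of the coefficients follows immediately from the conjugate-residue identity, and the corollary is proved.
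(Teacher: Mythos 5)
Your proposal is correct and follows essentially the same route as the paper: the paper records this corollary as an immediate consequence of Theorem~\ref{Inv-Laplace-th:1} (residue expansion over the zeros of $D(s)$ in the open left half-plane) combined with the residue computation and conjugate-pairing argument already carried out in the proof of Theorem~\ref{th:characterize.finite.sums}. Your added care about regrouping the infinite residue sum into conjugate pairs is a point the paper passes over silently, and is a reasonable refinement rather than a different approach.
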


Residues at simple poles are particularly easy to evaluate, and
the case of simple poles is the generic case.
It thus seems of interest to give a corollary for the case that all the poles are simple poles.

 \begin{corollary}
 \label{psi_u}
Let $\psi(u)$ be the ruin probability of the Bonus-Malus problem \eqref{surplus-process-BMS}. Let us suppose that the Laplace transform of the ruin probability is meromorphic of order zero. The ruin probability $\psi$ has the form
\begin{eqnarray*}
\psi(u) &=& \sum_{\substack{D(z_i)=0\\ \Re(z_i)<0}}
\frac{N_{\psi}(z_i)}{z_iD^\prime(z_i)} e^{z_iu}.
\end{eqnarray*} whenever the derivatives that appear are all non-zero.
\end{corollary}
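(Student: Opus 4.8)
The plan is to specialize Corollary \ref{meromorphic.of.order.zero} to the case of simple poles, which should reduce to an essentially routine residue computation. First I would invoke Corollary \ref{meromorphic.of.order.zero}, whose hypothesis (the Laplace transform is meromorphic of order zero) is exactly the hypothesis given here. That Corollary already tells us that $\psi(u)$ is the sum of residues of $e^{us}\frac{N_\psi(s)}{sD(s)}$ taken over the poles, which by Theorem \ref{negative-Solution} occur only at the zeros $z_i$ of $D(s)$ having strictly negative real part. So the content that remains is simply to evaluate each of these residues under the extra assumption that the poles are all simple.

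Next I would argue that each pole is simple and compute its residue. The integrand is $e^{us}\frac{N_\psi(s)}{sD(s)}$. At a zero $z_i$ of $D(s)$ with $\Re(z_i)<0$ we have $z_i\neq 0$, so the factor $\frac{1}{s}$ is holomorphic and nonzero there, and $N_\psi(s)$ together with $e^{us}$ is holomorphic. Hence the order of the pole of the integrand at $z_i$ is exactly the order of the zero of $D(s)$ at $z_i$, provided $N_\psi(z_i)\neq 0$. The hypothesis ``whenever the derivatives that appear are all non-zero'' is precisely $D'(z_i)\neq 0$, which forces $z_i$ to be a simple zero of $D(s)$, hence the integrand has a simple pole at $z_i$. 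The residue at a simple pole of $\frac{h(s)}{D(s)}$ with $h$ holomorphic is $\frac{h(z_i)}{D'(z_i)}$, so applying this with $h(s) = e^{us}\frac{N_\psi(s)}{s}$ gives the residue
\begin{equation*}
\operatorname{Res}\left(e^{us}\frac{N_\psi(s)}{sD(s)},\,s=z_i\right)=\frac{N_\psi(z_i)}{z_iD'(z_i)}\,e^{z_iu}.
\end{equation*}

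Finally I would sum these residues over the relevant poles. Corollary \ref{meromorphic.of.order.zero} guarantees the expansion as a sum over the zeros $z_i$ of $D(s)$ in the open left half-plane, with uniform convergence, so summing the simple-pole residues just computed yields exactly
\begin{equation*}
\psi(u)=\sum_{\substack{D(z_i)=0\\ \Re(z_i)<0}}\frac{N_\psi(z_i)}{z_iD'(z_i)}\,e^{z_iu},
\end{equation*}
as claimed. I expect no genuine obstacle here; the statement is a clean corollary and the only point requiring care is the bookkeeping that links the hypothesis $D'(z_i)\neq 0$ to the simplicity of the poles and to the validity of the simple-pole residue formula, together with noting that the $\frac{1}{s}$ factor causes no trouble because all relevant $z_i$ are strictly off the imaginary axis and in particular nonzero.
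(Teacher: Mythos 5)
Your proposal is correct and takes essentially the same route as the paper: the paper's proof likewise observes that $D'(z_i)\neq 0$ forces the zeros to be simple and then evaluates the residue of $e^{su}\frac{N_{\psi}(s)}{sD(s)}$ at each $z_i$ via the limit $\lim_{s\to z_i} e^{su}\frac{(s-z_i)N_{\psi}(s)}{sD(s)} = e^{z_i u}\frac{N_{\psi}(z_i)}{z_i D'(z_i)}$, with the sum-over-poles expansion supplied by the meromorphic-of-order-zero hypothesis. One tiny bookkeeping note: the sum-of-residues expansion you invoke is literally the content of Theorem \ref{Inv-Laplace-th:1} (of which both this statement and Corollary \ref{meromorphic.of.order.zero} are corollaries), whereas Corollary \ref{meromorphic.of.order.zero} states the expansion in exponential-trigonometric form; this does not affect the validity of your argument.
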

\begin{proof} Since a zero of $D(s)$ is simple exactly when the derivative $D'(s)$ is non-zero there, the given condition on derivatives insures that all the zeros of $D(s)$ in the open left half plane are simple zeros. The residue of $e^{su}\frac{N_{\psi}(s)}{sD(s)}$ at a simple zero $z_i$ of $D(s)$ is
\[ \lim_{s\longrightarrow z_i} e^{su}\frac{(s-z_i)N_{\psi} (s)}{sD(s)} =e^{z_i u}\frac{N_{\psi}(z_i )}{z_i D^\prime (z_i)}.  \]
\end{proof}
\section{Poles of the Laplace transform}

In this section, we determine the location of the poles and the principal parts of the poles for  the Laplace transform of the ruin probability.

Theorem \ref{BMS-Rongming-equation} gives an equation for the Laplace transform of the ruin probability of the form  \[\mathcal{L}(\psi(u);u,s) =  \frac{N_{\psi}(s)}{sD(s)},\]
where $D(s)$ is known.
This appears to not be useful because it apparently just converts the difficult problem of finding the ruin probability into the equally difficult problem of finding  $N_{\psi}(s).$

However, in most cases, all we need to determine is the behaviour of $N_{\psi}(s)$ at zeros of $D(s)$, and this problem turns out to be tractable.

As was already shown in the proof of Corollary \ref{cor:Lundberg1}, the above equation can be re-written as
 $${\mathcal L}(\psi(t),t,s)=\frac{1}{s}-\frac{N_1(s)}{D(s)},$$
where $N_1(s)$ is entire, so the poles of the Laplace transform can only occur at the origin or at zeros of $D(s).$ Recalling also the restrictions on locations of the poles given by Theorem \ref{negative-Solution}, we  have a fundamental Lemma:
\begin{lemma} The poles of the Laplace transform occur at the (real or complex) zeros of $D(s)$ that are in the open left half-plane.\label{lem:poles.at.zeros.of.Ds}\end{lemma}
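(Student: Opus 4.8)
The plan is to assemble the lemma directly from the structural facts already in place, since essentially all of the analytic work has been carried out in Theorem~\ref{negative-Solution} and in the proof of Corollary~\ref{cor:Lundberg1}. First I would recall the re-written form of the Laplace transform, $\mathcal{L}(\psi(t),t,s)=\frac{1}{s}-\frac{N_1(s)}{D(s)}$, where $N_1(s)=\lambda_1\sum_{k=1}^{K}\pi_ke^{sc_k}R_{1-\psi}(c_k,s)$ is entire by Lemma~\ref{Laplace-Exponential-Type}. Because $N_1$ is entire and $\frac{1}{s}$ contributes only a simple pole at the origin, the only candidate locations for poles of the meromorphic extension are $s=0$ together with the zeros of $D(s)$.

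The next step is to dispose of the candidate at the origin. As observed in Theorem~\ref{negative-Solution} and used again in the proof of Corollary~\ref{cor:Lundberg1}, although $D$ vanishes at $s=0$, the terminal value theorem forces $\frac{N_\psi(s)}{D(s)}$ to have at least a simple zero there, so that after dividing by $s$ the function $\frac{N_\psi(s)}{sD(s)}$ remains bounded at the origin; equivalently, the pole of $\frac{1}{s}$ cancels against the pole of $\frac{N_1(s)}{D(s)}$ arising from the zero of $D$ at $0$. Hence the origin is not a pole, and every pole must sit at a nonzero zero of $D(s)$.

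Finally, I would invoke the location results of Theorem~\ref{negative-Solution}, which already establish that the meromorphic extension has no poles of non-negative real part: none in the open right half-plane, by the absolute-convergence estimate $g(x)\geq|g(x+iy)|$, and none on the imaginary axis, since a function bounded throughout the open right half-plane cannot have poles on its boundary line. Combining this with the two previous steps confines every pole to a zero of $D(s)$ lying strictly in the open left half-plane, which is exactly the assertion of the lemma.

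As for difficulty, there is no genuine obstacle here, since the statement is a consolidation of results already proved; the lemma is stated mainly to isolate a fact that the later pole-counting arguments rely on. If anything needs care, it is invoking the cancellation at the origin correctly, namely noting that it is the first-order zero of $\frac{N_\psi}{D}$ at $0$ (equivalently, the boundedness of $\frac{N_\psi}{sD}$ there), rather than any property of $N_1$ alone, that removes the apparent pole, and being careful to cite Theorem~\ref{negative-Solution} for the exclusion of the imaginary axis and not merely the open right half-plane.
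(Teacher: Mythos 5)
Your proposal is correct and follows essentially the same route as the paper: the paper justifies this lemma in exactly the same way, by citing the rewriting $\mathcal{L}(\psi(t),t,s)=\frac{1}{s}-\frac{N_1(s)}{D(s)}$ with $N_1$ entire (from the proof of Corollary~\ref{cor:Lundberg1}, via Lemma~\ref{Laplace-Exponential-Type}) to confine candidate poles to the origin and the zeros of $D(s)$, and then invoking the restrictions of Theorem~\ref{negative-Solution} (including the cancellation at the origin) to exclude everything outside the open left half-plane. Your added care about attributing the cancellation at $s=0$ to the first-order zero of $N_\psi/D$ there, rather than to $N_1$ alone, is a faithful and slightly more explicit rendering of what the paper leaves implicit.
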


Since the function $D(s)$ can be determined in terms of the data given in the problem, we have thus determined the location of the poles. Of course, some of these poles might actually be removable singularities, due to zeros of the numerator. We now consider how to determine the terms of the series solution that (by Corollary \ref{meromorphic.of.order.zero}) we know exists in the meromorphic of order zero case.



In the case of only simple poles, which seems to be the case that occurs in examples, we obtain the following second fundamental result, that allows us to determine any desired number of terms of a series solution. 

\begin{theorem}
\label{linear_systems} Let us suppose all the zeros of $D(s)$ are simple. Let $\{z_1,z_2,z_3,\cdots \, z_K \}$ be the first $K$ zeros of $D(s)$ within the open left half plane, ordered by their modulus. Solve the following system of linear equations for the $A_i.$
\begin{equation}
\label{eq:linearsystem} D'(z_i)A_i =
-\lambda_1 {\displaystyle\sum_\ell \pi_\ell e^{c_\ell z_i} \left\{\frac{1}{z_i}(1-e^{-z_i c_\ell})- A_i c_\ell+\sum_{\substack{j\neq
i,\\j=1}}^{j=K} A_j \frac{1-e^{-(z_i-z_j)c_\ell}}{z_j-z_i}\right\}}{},
\end{equation}
Then,\begin{enumerate}
\item If $D(s)$ has $K$ zeros then the ruin probability is equal to  $\psi(u)=\sum_1^K  A_i e^{z_i u},$ \item If $D(s)$ has more than $K$ zeros then $\sum_1^K A_i e^{z_i u}$ approximates the ruin probability (for large time), and \item If the Laplace transform of the ruin probability is meromorphic of order zero, then the approximations converge uniformly to the true ruin probability as $K$ increases.
\end{enumerate}\end{theorem}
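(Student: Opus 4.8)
The plan is to start from the series representation already established and convert it into a closed, self-referential system of linear equations. By Corollary \ref{psi_u}, in the simple-pole case the ruin probability is $\psi(u) = \sum_i A_i e^{z_i u}$ with coefficients $A_i = N_\psi(z_i)/(z_i D'(z_i))$, the sum running over the zeros $z_i$ of $D(s)$ in the open left half-plane. The only obstruction to using this formula directly is that the numerator $N_\psi(z_i)$ contains the unknown quantities $R_{1-\psi}(c_\ell, z_i) = \int_0^{c_\ell}(1-\psi(u))e^{-z_i u}\,du$, which themselves involve $\psi$. The key idea is that feeding the series for $\psi$ back into these integrals produces equations relating the $A_i$ to one another, and that for finitely many zeros this system closes.

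First I would simplify $N_\psi(z_i)$. Since $z_i$ is a zero of $D(s)$, the definition $N_\psi(s) = D(s) - s\lambda_1 \sum_\ell \pi_\ell e^{sc_\ell} R_{1-\psi}(c_\ell, s)$ collapses at $s = z_i$ to $N_\psi(z_i) = -z_i \lambda_1 \sum_\ell \pi_\ell e^{z_i c_\ell} R_{1-\psi}(c_\ell, z_i)$, so that the formula for $A_i$ becomes $D'(z_i) A_i = -\lambda_1 \sum_\ell \pi_\ell e^{z_i c_\ell} R_{1-\psi}(c_\ell, z_i)$ after cancelling the common factor $z_i$. Next I would substitute $\psi(u) = \sum_j A_j e^{z_j u}$ into $R_{1-\psi}(c_\ell, z_i)$ and integrate term by term over $[0, c_\ell]$. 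The constant term contributes $(1 - e^{-z_i c_\ell})/z_i$; the diagonal term $j = i$ integrates to $A_i c_\ell$; and each off-diagonal term $j \neq i$ integrates to $A_j (1 - e^{-(z_i - z_j)c_\ell})/(z_j - z_i)$. Collecting these reproduces exactly the bracketed expression in \eqref{eq:linearsystem}, so the coefficients satisfy the stated linear system. When $D(s)$ has exactly $K$ zeros the sum is finite and genuinely closed, giving statement 1; since we know a priori from Corollary \ref{psi_u} that the true coefficients solve the system, it is consistent, and solving it recovers $\psi$.

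For statement 2, when $D(s)$ has more than $K$ zeros, truncating to the first $K$ (smallest modulus) zeros retains those nearest the imaginary axis, which govern the large-time behaviour, while discarding high-modulus zeros whose exponentials are subdominant for large $u$; thus $\sum_1^K A_i e^{z_i u}$ approximates $\psi$ for large time. For statement 3, the meromorphic-of-order-zero hypothesis lets me invoke Theorem \ref{Inv-Laplace-th:1} through Corollary \ref{meromorphic.of.order.zero}, which guarantees that the full residue series converges uniformly to $\psi$; hence the truncated solutions converge uniformly as $K \to \infty$.

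The main obstacle I anticipate is making the approximation statements rigorous rather than heuristic. The term-by-term integration in the central computation requires uniform convergence of $\sum_j A_j e^{z_j u}$ on each interval $[0, c_\ell]$, which is exactly what the meromorphic-of-order-zero assumption supplies via Theorem \ref{Inv-Laplace-th:1}; without it the interchange of sum and integral must be justified separately. More delicate is the error control behind statement 2: truncating the system perturbs every equation by the omitted $j > K$ terms, so the finite-system solution is not literally the restriction of the true coefficients, and one must argue that this perturbation is small — controlled by the decay of $R_{1-\psi}(c_\ell, z_j)$ and of $1/D'(z_j)$ as $|z_j|$ grows — in order to conclude that the computed finite sum really does track $\psi$ for large time. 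A secondary point to address is the solvability of the truncated $K \times K$ system, which in the exact case of statement 1 follows from its known consistency.
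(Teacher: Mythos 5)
Your proposal is correct and follows essentially the same route as the paper: evaluate $N_\psi(z_i)$ using $D(z_i)=0$, substitute the exponential series into $R_{1-\psi}(c_\ell,z_i)$ and integrate term by term to close the linear system, then dispose of the three cases exactly as the paper does (finite case exact, truncation as a large-time approximation, and uniform convergence under the meromorphic-of-order-zero hypothesis via Corollary \ref{meromorphic.of.order.zero}). The only cosmetic difference is that you anchor the series representation in Corollary \ref{psi_u}, whereas for the finite-zero case (statement 1) the paper invokes Theorem \ref{th:characterize.finite.sums}, which avoids assuming the meromorphic-of-order-zero condition there.
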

\begin{proof} If there are exactly $K$ roots in total, then Theorem \ref{th:characterize.finite.sums} gives the ruin probability in the general complex exponential form $\psi(u)=\sum_1^K  A_i e^{z_i u}.$  Because the roots are simple, the $A_i$ are scalars rather than polynomials. Theorem \ref{BMS-Rongming-equation} provides the equation \[\mathcal{L}(\psi(u);u,s) =  \frac{N_{\psi}(s)}{sD(s)}.\]
Taking the residue at $s=z_i$ on both sides (using the residue formula, as recalled in Equation \eqref{eq:residue.formula} in the Appendix) we have
$$A_i = \frac{1}{z_i} \frac{N_\psi (z_i)}{D'(z_i)}.$$

Putting in the definition of $N_\psi(u)$ gives
$$A_i = -\lambda_1 \frac{ \sum_k  \pi_k e^{c_k z_i} R_{1-\psi}(c_k,z_i)}{D'(z_i)}.$$
By definition, $R_{1-\psi}(a,s)=\int_0^a (1-\psi(t))e^{-st}\,dt$,
and evaluating this gives
 $$R_{1-\psi}(a,s)=\frac{1}{s}(1-e^{-sa})+ \sum A_i \frac{1}{z_i-s}(1-e^{-(s-z_i)a}).$$
Thus we obtain:
\begin{equation}A_i =
-\lambda_1 \frac{\displaystyle\sum_k \pi_k e^{c_k z_i} \left\{\frac{1}{z_i}(1-e^{-z_i c_k})- A_i c_k+\sum_{j\neq i} A_j \frac{1-e^{-(z_i-z_j)c_k}}{z_j-z_i}\right\}}{D'(z_i)}.
\label{eq:without.moments} \end{equation}
Since $D(s)$, $\lambda_i,$ $\pi_k,$ $c_k,$ and $z_i$ can all be obtained from the data given in
our Bonus--Malus problem, the above is indeed a system of
linear equations for the $A_i.$ Rearranging slightly, we obtain the claimed set of inhomogeneous linear equations.
In the case where the ruin probability is meromorphic of order zero, Corollary \ref{meromorphic.of.order.zero} provides a solution in the form of an infinite sum $\psi(u)=\sum  A_i e^{z_i u},$ and, truncating, we obtain an approximate solution with coefficients determined as above.
If we do not have the meromorphic of order zero condition, then all we can say is that for large time (where the poles of large and negative real part become insignificant) we will have an approximation.
\end{proof}

The above result is phrased in terms of ruin probabilities, but it is readily shown that under the meromorphic of order zero assumption, we can write a similar set of linear equations for the values of the unknown function $N_\psi(s)$ at the roots $z_i$ of $D(s)$. The following corollary gives the principal parts of the poles of the Laplace transform of the ruin probability, in the case where all poles are simple.
\begin{corollary}Let the ruin probability have a Laplace transform that is meromorphic of order zero, and suppose that all the zeros of $D(s)$ are simple. Then, the values of $N_\psi (z_i),$ where the $z_i$ are the roots of $D(s)$, can be determined by solving systems of linear equations (and taking the limit as $K\longrightarrow \infty,$ if needed.) The principal part of the simple pole at $z_i$ is $ \frac{1}{z_i} \frac{N_\psi (z_i)}{D'(z_i)}(s-z_i)^{-1}.$ \end{corollary}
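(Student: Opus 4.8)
The plan is to leverage the two results that immediately precede this corollary, namely Corollary \ref{psi_u} and Theorem \ref{linear_systems}, together with the elementary residue computation already carried out in the proof of Theorem \ref{linear_systems}. The corollary asserts two things: first, that the values $N_\psi(z_i)$ can be determined by solving linear systems, and second, that the principal part of the simple pole at $z_i$ is $\frac{1}{z_i}\frac{N_\psi(z_i)}{D'(z_i)}(s-z_i)^{-1}$. I would treat these as two separate steps, the second being essentially immediate and the first requiring a translation of the already-established linear system from the $A_i$ to the $N_\psi(z_i)$.

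First I would dispose of the principal-part claim. Under the stated hypotheses all zeros of $D(s)$ are simple, so by Lemma \ref{lem:poles.at.zeros.of.Ds} the poles of $\frac{N_\psi(s)}{sD(s)}$ occurring in the open left half-plane are simple. For a simple pole at $z_i$ the principal part is $\Res\left(\frac{N_\psi(s)}{sD(s)},s=z_i\right)(s-z_i)^{-1}$, and the residue formula gives $\Res\left(\frac{N_\psi(s)}{sD(s)},s=z_i\right)=\lim_{s\to z_i}\frac{(s-z_i)N_\psi(s)}{sD(s)}=\frac{N_\psi(z_i)}{z_iD'(z_i)}$, exactly as computed in the proofs of Corollary \ref{psi_u} and Theorem \ref{linear_systems}. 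This yields the claimed principal part directly.

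For the first claim I would observe that the proof of Theorem \ref{linear_systems} already establishes the relation $A_i=\frac{1}{z_i}\frac{N_\psi(z_i)}{D'(z_i)}$ between the coefficients $A_i$ of the series expansion of $\psi$ and the unknown values $N_\psi(z_i)$. Since the meromorphic-of-order-zero hypothesis lets Corollary \ref{meromorphic.of.order.zero} write $\psi(u)=\sum A_i e^{z_i u}$, and since Theorem \ref{linear_systems} supplies an explicit (possibly infinite, hence truncated-and-limited) linear system determining the $A_i$, I would simply invert the relation above to read off $N_\psi(z_i)=z_iD'(z_i)A_i$. Thus once the linear system \eqref{eq:linearsystem} has been solved for the $A_i$ (taking $K\to\infty$ if the sum is genuinely infinite), the $N_\psi(z_i)$ are obtained by scalar multiplication, so they too are determined by solving linear systems.

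The main obstacle, such as it is, is bookkeeping rather than mathematics: I must be careful that the relation $A_i=\frac{1}{z_i}\frac{N_\psi(z_i)}{D'(z_i)}$ is well-defined, which requires $z_i\neq 0$ and $D'(z_i)\neq 0$. The former holds because by Theorem \ref{negative-Solution} the relevant $z_i$ lie in the open left half-plane and so are nonzero, and the latter holds because the zeros of $D(s)$ are assumed simple, so $D'(z_i)\neq 0$ by definition. With these nondegeneracy conditions verified, the inversion is legitimate and the argument closes. I expect the entire proof to be short, essentially a remark that the substitution $N_\psi(z_i)=z_iD'(z_i)A_i$ converts the already-solved system of Theorem \ref{linear_systems} into the desired statement.
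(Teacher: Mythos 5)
Your proposal is correct and takes essentially the same route as the paper, which in fact offers no separate proof of this corollary: it merely remarks beforehand that the linear system of Theorem \ref{linear_systems} ``is readily shown'' to yield equations for the $N_\psi(z_i)$, which is precisely your inversion $N_\psi(z_i)=z_iD'(z_i)A_i$ (legitimate, as you note, since $z_i\neq 0$ in the open left half-plane and $D'(z_i)\neq 0$ by simplicity). Your principal-part computation is the same simple-pole residue evaluation already used in the proofs of Corollary \ref{psi_u} and Theorem \ref{linear_systems}, so nothing is missing.
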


There is no difficulty in principle to generalize Theorem  \ref{linear_systems} and its Corollary to the case of poles of higher order. Each pole of order $m$ contributes a total of $m$ linear equations to a linear system that we solve by standard methods of linear algebra. However, the general result is complicated to state,  and
we have only seen simple zeros in all examples we have looked at. To keep the length of the paper manageable, we have focused on that case.
We now give a brief example of how zeros of higher order can be handled.
\begin{example} Suppose that $D(s)$ has a simple zero at $z_1$ and a double zero at $w_2.$ We then consider the  solution $\psi(u)=A_1e^{z_1u}+A_2e^{w_2u}+B_2ue^{w_2u},$ with the coefficients to be determined from the system of equations
\[\left\{\begin{array}{rl}A_1=&\Res\left(\frac{N_{\psi}(s)}{sD(s)},z_1\right), \\
      A_2=&\Res\left(\frac{N_{\psi}(s)}{sD(s)},w_2\right),\\
                B_2=&\lim_{s\longrightarrow w_2} (s-w_2)^2\left(\frac{N_{\psi}(s)}{sD(s)}\right)\\
\end{array}\right\}.\]
The right hand side of the above will be linear (and inhomogeneous) in the coefficients $A_1,$ $A_2,$ and $B_2. $
\end{example}

The  meromorphic of order zero assumption that is used in the case of a solution by an infinite sum does not need to be checked in practice, because one can verify a formal
solution by substitution into the integral equation of Theorem
\ref{Rongming-2007} in the Appendix. However, it is necessary that the Laplace transform of the given density function be meromorphic or entire, as discussed earlier. The main restriction on the scope of our methods is thus that the Laplace transform of the density function $f_X$ must not have an essential singularity or a  branch point. This easy-to-verify condition is used by us in a  fundamental way. The Cauchy distribution gives an example of a distribution whose density function has an inadmissible Laplace transform.

\section{The doubly stochastic case}
In this section, we show that provided that the random premium density drops off sufficiently quickly (faster than any exponential), the results of the previous section generalize to the following doubly stochastic situation:

Consider the doubly stochastic compound Poisson process
\begin{eqnarray} \label{surplus-process-BMS1}
  U_t &=& u+\sum_{i=1}^{N_1(t)}C_i-\sum_{j=1}^{N_2(t)}X_j\\
\nonumber  &=&u+S_t,
\end{eqnarray}
where  $C_1,C_2,\cdots$ and $X_1,X_2,\cdots,$ respectively, are
two independent i.i.d. random samples from independent random
premium $C$ and random claim size $X$, two independent Poisson
processes $N_1(t)$ and $N_2(t)$ (with intensity rates $\lambda_1$
and $\lambda_2$) respectively, stand for claims and purchase
request processes, and $u$ represents initial wealth $u$ of the
process. Moreover, suppose that the non-negative and continuous random
premium $C$ and claim $X$ respectively have density functions
$f_C$ and $f_X$ that are in Cai's family of distributions.

Rongming et al. (2007) establishes an
integral equation of the form
\begin{equation}\label{integral.equation.DS}
-(\lambda_1+\lambda_2)\widetilde{\psi}(u)+\lambda_1
E(\widetilde{\psi}(u+C))+\lambda_2\int_0^{u}\widetilde{\psi}(u-x)dF_X(x)=0,
\end{equation}
describing the survival probability
$\widetilde{\psi}(u):=1-\psi(u),$ of the surplus process
\eqref{surplus-process-BMS1}. The statement and a short proof are in the Appendix, see Theorem \ref{Rongming-2007} on page \pageref{Rongming-2007}.
The situation studied in the earlier part of the paper can be recovered by taking a discrete and finite distribution for $C.$

Proceeding much as in the proof of Theorem \ref{BMS-Rongming-equation}, we obtain:
\begin{theorem}
\label{New-Rongming-equation.DS} Suppose $U_t$ represents a double
stochastic compound Poisson process of the form given by equation \eqref{surplus-process-BMS1}. Then, the Laplace transform of the ruin probability
$\psi(\cdot)$ of such a doubly stochastic compound Poisson process
satisfies
\begin{align*}\label{eqn:Rongming01}
  \mathcal{L}(\psi(u);u,s) &=
  \frac{N_{\psi}(s)}{sD(s)},\\
\intertext{where}
D(s)&:=-\lambda_1-\lambda_2+\lambda_1M_C(s)+\lambda_2M_X(-s),\\
N_{\psi}(s)&:=D(s)-s\lambda_1\int_{0}^{\infty}e^{sc}R_{1-\psi}(c,s)dF_C(c),\end{align*}
and $M_C(\cdot)$ and $M_X(\cdot),$ respectively, represent the analytic continuations of the
moment generating functions of $C$ and $X.$ The
distribution function of $X$ is denoted $F_C.$
\end{theorem}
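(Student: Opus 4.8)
The plan is to follow the same template used in the proof of Theorem~\ref{BMS-Rongming-equation}, replacing the finite discrete premium distribution $\pi=(\pi_1,\dots,\pi_K)$ by a general continuous premium density $f_C$ and correspondingly replacing finite sums $\sum_k \pi_k e^{sc_k}(\cdots)$ by integrals $\int_0^\infty e^{sc}(\cdots)\,dF_C(c).$ First I would invoke Theorem~\ref{Rongming-2007} in the Appendix, which in this doubly stochastic setting gives the integral equation \eqref{integral.equation.DS} for the survival probability $\widetilde\psi(u)=1-\psi(u).$ The starting point of the computation is therefore
\[
-(\lambda_1+\lambda_2)\widetilde{\psi}(u)+\lambda_1 E(\widetilde{\psi}(u+C))+\lambda_2\int_0^{u}\widetilde{\psi}(u-x)\,dF_X(x)=0.
\]

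Next I would apply the Laplace transform term by term. The first term is immediate. For the convolution term $\int_0^u \widetilde\psi(u-x)\,dF_X(x),$ the convolution theorem for Laplace--Stieltjes transforms yields $\mathcal L(\widetilde\psi(u);s)\,M_X(-s),$ where $M_X(-s)=\int_0^\infty e^{-sx}\,dF_X(x)$ is the (analytically continued) moment generating function evaluated at $-s.$ The genuinely new step is the expectation term $E(\widetilde\psi(u+C))=\int_0^\infty \widetilde\psi(u+c)\,dF_C(c).$ Transforming and interchanging the order of integration, I would write $\mathcal L\big(\int_0^\infty \widetilde\psi(u+c)\,dF_C(c);s\big)=\int_0^\infty e^{sc}\big(\mathcal L(\widetilde\psi;s)-R_{\widetilde\psi}(c,s)\big)\,dF_C(c),$ using the shift rule $\mathcal L(\widetilde\psi(u+c);s)=e^{sc}\big(\mathcal L(\widetilde\psi(u);s)-\int_0^c \widetilde\psi(u)e^{-su}\,du\big)$ exactly as in the discrete case, and recalling $R_{1-\psi}(c,s)=\int_0^c(1-\psi(u))e^{-su}\,du=\int_0^c \widetilde\psi(u)e^{-su}\,du.$ Collecting the coefficients of $\mathcal L(\widetilde\psi(u);s)$ produces the factor $-\lambda_1-\lambda_2+\lambda_1 M_C(s)+\lambda_2 M_X(-s)=D(s),$ where $M_C(s)=\int_0^\infty e^{sc}\,dF_C(c)$ replaces the finite sum $\sum_k \pi_k e^{sc_k}.$ Solving for $\mathcal L(\widetilde\psi(u);s)$ gives $\mathcal L(\widetilde\psi(u);s)=N_1(s)/D(s)$ with $N_1(s)=\lambda_1\int_0^\infty e^{sc}R_{1-\psi}(c,s)\,dF_C(c),$ and finally $\mathcal L(\psi(u);s)=\tfrac1s-N_1(s)/D(s),$ which after combining fractions is exactly $N_\psi(s)/(sD(s))$ with the stated $N_\psi(s)=D(s)-s\lambda_1\int_0^\infty e^{sc}R_{1-\psi}(c,s)\,dF_C(c).$

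The main obstacle is not algebraic but analytic: justifying the interchange of the Laplace integral with the expectation integral against $dF_C$ and the convergence of $M_C(s)=\int_0^\infty e^{sc}\,dF_C(c)$ and $M_X(-s).$ This is precisely where the hypothesis that $f_C$ and $f_X$ lie in Cai's family of distributions enters, and in particular the stated assumption (from the section preamble) that the premium density drops off faster than any exponential, which guarantees that $M_C(s)$ converges for all $s$ in a suitable region and that $M_C$ extends to an entire function, validating Fubini's theorem on the relevant half-plane. I would therefore devote the bulk of the rigor to checking absolute convergence of the double integral $\int_0^\infty\!\int_0^\infty e^{sc}|\widetilde\psi(u+c)|e^{-su}\,du\,dF_C(c)$ under this decay hypothesis, after which the term-by-term transformation and the Fubini interchange are justified. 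Everything else is a routine transcription of the finite-sum computation of Theorem~\ref{BMS-Rongming-equation} into the integral setting, so once convergence is secured the derivation of the final formula is immediate.
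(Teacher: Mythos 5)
Your proposal is correct and follows exactly the route the paper takes: the paper offers no separate proof of Theorem~\ref{New-Rongming-equation.DS}, stating only that one proceeds ``much as in the proof of Theorem~\ref{BMS-Rongming-equation},'' and your transcription of that argument (integral equation from Theorem~\ref{Rongming-2007}, shift rule and convolution theorem, collecting the coefficient $D(s)$, then $\mathcal{L}(\psi;s)=\tfrac1s-N_1(s)/D(s)$) is precisely that computation with sums over $\pi_k$ replaced by integrals against $dF_C$. Your added attention to the Fubini interchange and convergence of $M_C(s)$ is a genuine improvement in rigor over the paper's one-line justification, and it is consistent with the decay hypotheses the paper imposes in that section.
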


We now give a lemma insuring that under sufficiently strong conditions, the integral $\int_{0}^{\infty}e^{sc}R_{1-\psi}(c,s)dF_C(c)$ defines an entire function of $s$.

\begin{lemma} Suppose that the distribution $C$ has a complex moment function that extends to a meromorphic function in the complex plane, and has no poles on the real axis.
Then $\int_{0}^{\infty}e^{sc}R_{1-\psi}(c,s)dF_C(c)$ is entire. \label{lem:Rint.is.entire}
\end{lemma}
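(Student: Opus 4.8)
The plan is to exhibit $I(s):=\int_{0}^{\infty}e^{sc}R_{1-\psi}(c,s)\,dF_C(c)$ as a locally uniform integral of functions that are holomorphic in $s$, and then to conclude by Morera's theorem. First I would observe that for each fixed $c\ge 0$ the integrand $e^{sc}R_{1-\psi}(c,s)$ is an entire function of $s$: it is the product of the entire function $e^{sc}$ with $R_{1-\psi}(c,s)=\int_0^c(1-\psi(u))e^{-su}\,du$, and this latter factor is entire in $s$ for the same reason as in Lemma \ref{Laplace-Exponential-Type}, namely that it is the integral over the finite interval $[0,c]$ of a function bounded in $u$ and entire in $s$. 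Thus the whole question reduces to controlling the convergence of the outer integral against $dF_C$, uniformly on compact subsets of the $s$-plane.

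For the basic estimate I would use that $\psi$ is a ruin probability, so $0\le 1-\psi(u)\le 1$, which gives
\[|e^{sc}R_{1-\psi}(c,s)|\le e^{\Re(s)\,c}\int_0^c e^{-\Re(s)u}\,du\le c\,e^{|\Re(s)|\,c},\]
the final inequality following by treating $\Re(s)\ge 0$ and $\Re(s)<0$ separately. Hence on any disc $|s|\le M$ the integrand is dominated by $c\,e^{Mc}\le e^{(M+1)c}$, so the outer integral is bounded by $\int_0^\infty e^{(M+1)c}\,dF_C(c)=M_C(M+1)$. The argument therefore reduces to showing that $C$ has finite exponential moments of every order, i.e. that $\int_0^\infty e^{\sigma c}\,dF_C(c)<\infty$ for every real $\sigma$.

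This last point is the main obstacle, and it is exactly where the hypothesis must be used; the subtlety is that mere analyticity of the meromorphic continuation at a real point does not by itself guarantee convergence of the defining integral there. To bridge this gap I would invoke Landau's theorem on the singularity of the Laplace transform of a positive measure at its abscissa of convergence. The defining integral $\int_0^\infty e^{\sigma c}\,dF_C(c)$ (with $\sigma=\Re(s)$) converges on a maximal half-plane of the form $\Re(s)<\sigma_0$, and since its value at $s=0$ equals $1$ we have $\sigma_0\ge 0$. If $\sigma_0$ were finite, then applying Landau's theorem to the positive measure $dF_C$ (after the substitution $s\mapsto -s$ that turns $M_C$ into an ordinary Laplace transform) would force $s=\sigma_0$ to be a genuine singularity of the analytic continuation; because the continuation is meromorphic this singularity would be a pole, and it would lie on the real axis, contradicting the hypothesis. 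Hence $\sigma_0=+\infty$ and every exponential moment of $C$ is finite, which validates the domination of the preceding paragraph.

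Finally I would assemble the pieces. The uniform bound shows that $I(s)$ converges absolutely and locally uniformly, so $I$ is continuous on $\mathbb{C}$. For an arbitrary triangle $\Delta\subset\mathbb{C}$, Fubini's theorem (justified by the uniform bound on the compact set $\Delta$) permits interchanging the contour integral with the integral against $dF_C$, and $\oint_{\partial\Delta}e^{sc}R_{1-\psi}(c,s)\,ds=0$ by Cauchy's theorem since the integrand is entire in $s$; therefore $\oint_{\partial\Delta}I(s)\,ds=0$. Morera's theorem then gives that $I$ is holomorphic on all of $\mathbb{C}$, that is, entire, as claimed.
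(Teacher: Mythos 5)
Your proof is correct, and its skeleton is the same as the paper's: bound the integrand using $0\le 1-\psi\le 1$ (the paper's version of your estimate is $\left|\int_0^\infty e^{sc}R_{1-\psi}(c,s)\,dF_C(c)\right|\le\int_0^\infty \frac{e^{\Re(s)c}-1}{\Re(s)}\,dF_C(c)$), invoke Lemma \ref{Laplace-Exponential-Type} for entirety of the integrand in $s$, and pass from locally uniform convergence of the outer integral to holomorphy of the limit (the paper does this by a Weierstrass-type argument on disks; you do it by Fubini plus Morera, which is an equivalent and equally standard route). The genuine difference is your middle paragraph. The paper, after arriving at the dominating integral, simply asserts that ``this last expression converges for all values of $\Re(s)$,'' i.e.\ it takes for granted that $C$ has finite exponential moments of every order; nowhere does it explain how that follows from the stated hypothesis, which concerns only the meromorphic \emph{extension} of the moment function and the absence of real poles. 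You correctly identified this as the crux --- analyticity of a continuation at a real point does not by itself give convergence of the defining integral there --- and you closed the gap with Landau's theorem: for the Laplace transform of a positive measure, the real point at the abscissa of convergence is a genuine singularity, which for a function meromorphic on $\mathbb{C}$ would have to be a pole on the real axis, contradicting the hypothesis; hence the abscissa is at infinity and all exponential moments are finite. So your argument is not just a restatement but a repair: it supplies the one step the paper leaves unjustified, and in doing so it explains exactly why the lemma's hypothesis is formulated the way it is.
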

\begin{proof}   First we check that the integral $\int_{0}^{\infty}e^{sc}R_{1-\psi}(c,s)dF_C(c)$ will converge. Since $1-\psi(u)$ is a bounded and non-negative function, and $c$ is non-negative and real, we have
\begin{align*} \left| \int_{0}^{\infty}e^{sc}R_{1-\psi}(c,s)dF_C(c)\right|  &\leq
                                                                            \int_0^\infty \left|e^{sc}\right|  \int_0^c (1-\psi(t)) \left| e^{-st}\right| \,dt\,dF_{C} (c)\\
                                                 &= \int_0^\infty e^{\Re(s)c} \int_0^c e^{-t\Re(s)}\,dt\,dF_{C} (c)\\
                                                   &= \int_0^\infty \frac{e^{\Re(s)c}-1}{\Re(s)}\,dF_{C}(c).\\
\end{align*}
This last expression converges for all values of $\Re(s)$.
 Thus the integral $\int_{0}^{\infty}e^{sc}R_{1-\psi}(c,s)dF_C(c)$ converges, and Lemma \ref{Laplace-Exponential-Type} shows that the integrand is an entire function of $s.$
If we restrict the $s$ variable to some arbitrary disk in the complex plane, then we obtain uniform convergence with respect to $s$ in that disk, and this is sufficient to insure that the
integral defines an analytic function that is bounded on  any arbitrary disk in the complex plane --- thus, an entire function.
\end{proof}

For certain distributions, for example, the Cauchy distribution, the complex moment function does not exist.
In most cases, however, the complex moment function does exist, sometimes just in a region of the complex plane, and we may be able to analytically continue the complex moment function to the whole plane.
This assumption holds for most of the common distributions, as can be seen from Table \ref{table:moment.functions} on page  \pageref{table:moment.functions}.

\begin{sidewaystable}
\begin{center}
\begin{tabular}{l c l}
  \hline\large
  Distribution & \large Moment function  &\large  Properties \\
  \hline\normalsize
Bernoulli&  $1-p+pe^t$ & holomorphic\\
Infinite geometric discrete &  $\frac{p}{1-(1-p) e^t},$ $t<-\ln(1-p)$& extends to meromorphic\\

Binomial $B(n, p)$ &  $(1-p+pe^t)^n$& holomorphic for integer $n$ \\

Poisson distribution& $e^{\lambda(e^t-1)}$&meromorphic\\

Uniform distribution $U(a, b)$
 & $\frac{e^{tb} - e^{ta}}{t(b-a)}$& meromorphic\\

Normal distribution $N(\mu, \sigma^2)$
 & $e^{t\mu + \frac{1}{2}\sigma^2t^2}$&holomorphic\\

Chi-squared $\chi^2_k$
& $(1 - 2t)^{-k/2}$&meromorphic for $k$ even\\

Gamma distribution $\Gamma(k, \theta)$
& $(1 - t\theta)^{-k}$&meromorphic for integer $k$\\

Exponential distribution $Exp(\lambda)$
& $(1-t\lambda^{-1})^{-1}$&meromorphic\\

Degenerate point-mass $\delta_a$
& $e^{ta}$&holomorphic\\

Laplace distribution $L(\mu, b)$
& $\frac{e^{t\mu}}{1 - b^2t^2}$&meromorphic\\

Negative Binomial $NB(r, p)$
 &  $\frac{(1-p)^r}{(1-pe^t)^r}$&meromorphic for integer $r$\\
Cauchy $\mbox{Cauchy}(\mu,\theta)$
& does not exist&\\
  \hline
\end{tabular}
\caption{Examples of common moment functions.\label{table:moment.functions}}
\end{center}
\end{sidewaystable}

\begin{theorem}
\label{residue_expansion_DS} Suppose we are given a doubly stochastic system as displayed in
\eqref{surplus-process-BMS1}. Suppose the distribution $C$ has a complex moment function extending to a meromorphic function with no poles on the real line, and suppose
 the random claim density $X$ has a complex moment function that extends to a meromorphic function.
Then,
\begin{eqnarray*}
  \psi(u) &=& \sum_{z_j} \Res \left(e^{su}\frac{N(s)}{sD(s)},z_j\right),
\end{eqnarray*}
where $z_j$ are the zeros of $D(s)$ in the open left half-plane.  \end{theorem}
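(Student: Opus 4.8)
The plan is to treat the doubly stochastic statement as the infinite-sum analog of Corollary \ref{meromorphic.of.order.zero}, reusing the machinery already assembled. First I would invoke Theorem \ref{New-Rongming-equation.DS} to write the Laplace transform of the ruin probability in the explicit form $\mathcal{L}(\psi(u);u,s)=\frac{N_\psi(s)}{sD(s)}$, with $D(s)=-\lambda_1-\lambda_2+\lambda_1 M_C(s)+\lambda_2 M_X(-s)$ and $N_\psi(s)=D(s)-s\lambda_1\int_0^\infty e^{sc}R_{1-\psi}(c,s)\,dF_C(c)$. The hypotheses say that $M_C$ and $M_X$ extend to meromorphic functions, so $D(s)$ is meromorphic; and Lemma \ref{lem:Rint.is.entire} (which uses precisely the no-poles-on-the-real-axis condition on $C$ to guarantee that $M_C$ is finite along the whole real line, hence that the defining integral converges) shows that the integral term is entire. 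Consequently $N_\psi(s)$ is meromorphic and the ratio $\frac{N_\psi(s)}{sD(s)}$ is meromorphic on the whole plane.

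Next I would locate the poles. Decomposing $\frac{N_\psi(s)}{sD(s)}=\frac{1}{s}-\frac{N_1(s)}{D(s)}$, where $N_1(s)=\lambda_1\int_0^\infty e^{sc}R_{1-\psi}(c,s)\,dF_C(c)$ is entire, exactly as in the proof of Corollary \ref{cor:Lundberg1}, the apparent pole at the origin cancels against the $\frac{1}{s}$ term, so the only poles come from the zeros of $D(s)$. To confine these to the open left half-plane I would repeat verbatim the non-negativity argument of Theorem \ref{negative-Solution}: since $\psi$ is bounded and non-negative, the Laplace integral $g(s)=\int_0^\infty e^{-su}\psi(u)\,du$ converges absolutely on the open right half-plane, and for $s=x+iy$ with $x>0$ it satisfies $g(x)\geq|g(x+iy)|$, so the meromorphic extension can have no poles with non-negative real part. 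This identifies the admissible poles as exactly the zeros $z_j$ of $D(s)$ in the open left half-plane.

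With the poles in hand, I would apply the inversion theorem for meromorphic-of-order-zero Laplace transforms, Theorem \ref{Inv-Laplace-th:1} in the Appendix, which expresses $\psi(u)$ as the sum of the residues of $e^{su}\frac{N_\psi(s)}{sD(s)}$ over the $z_j$. The place where the doubly stochastic hypotheses genuinely do work is in certifying that the transform is meromorphic of order zero: the rapid decay of the premium density $f_C$ (equivalently, the controlled size of $M_C$ furnished via Lemma \ref{lem:Rint.is.entire}) is what keeps the entire numerator term from outgrowing $D(s)$, and so keeps $\frac{N_\psi}{sD}$ from growing along an exhausting family of contours $\Gamma_n$.

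I expect the main obstacle to be exactly this growth estimate, namely verifying the order-zero bound that $\oint_{\Gamma_n}\left|\frac{N_\psi(z)}{zD(z)}\right|\frac{|dz|}{|z|^{p+1}}$ stays bounded as the contours $\Gamma_n$ exhaust the plane; this is the only step not already supplied by the earlier theorems, and it is where the premium-decay assumption must be converted into a quantitative estimate. In practice one can sidestep a direct verification, as noted earlier in the paper: once the candidate sum of residues is written down, it may be checked a posteriori to satisfy the integral equation \eqref{integral.equation.DS} of Theorem \ref{Rongming-2007} by substitution, confirming the formula without appealing to the order-zero estimate at all. I would therefore lead with the appeal to Theorem \ref{Inv-Laplace-th:1} and record the substitution check as a fallback route.
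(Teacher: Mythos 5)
Your proposal is correct and follows essentially the same route as the paper's proof: it invokes Theorem \ref{New-Rongming-equation.DS} for the transform, Lemma \ref{lem:Rint.is.entire} to make the integral term in the numerator entire, the pole-location arguments of Theorem \ref{negative-Solution}, and Theorem \ref{Inv-Laplace-th:1} for the residue expansion, with your decomposition $\frac{1}{s}-N_1(s)/D(s)$ being just a repackaging of the paper's observation that $N_\psi$ and $D$ share the same principal parts, so that poles of $D$ are removable singularities of the ratio. Your one departure is to your credit: you explicitly flag that the meromorphic-of-order-zero hypothesis required by Theorem \ref{Inv-Laplace-th:1} is absent from the theorem's stated hypotheses and must either be verified by a growth estimate or sidestepped by an a posteriori substitution into the integral equation, a gap that the paper's own proof passes over silently.
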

\begin{proof}  By Theorem \ref{New-Rongming-equation.DS} we have
\begin{eqnarray}\label{eqn:Rongming01}
  \mathcal{L}(\psi(u);u,s) &=&
  \frac{N_{\psi}(s)}{sD(s)},
\end{eqnarray}
where
$D(s):=-\lambda_1-\lambda_2+\lambda_1M_C(s)+\lambda_2M_X(-s),$ and
$N_{\psi}(s):=D(s)-s\lambda_1\int_{0}^{\infty}e^{sc}R_{1-\psi}(c,s)dF_C(c),$
with $M_C(\cdot)$ and $M_X(\cdot),$ respectively, representing the (analytic continuation of the) complex
moment generating functions of $C$ and $X$.
By hypothesis, $M_X$ and $M_C$ are meromorphic. Thus, the function $D(s)$ is meromorphic.
Lemma  \ref{lem:Rint.is.entire} insures that the integral appearing on the right hand side in the above definition of $N_{\psi}(s)$ defines a holomorphic function. Therefore, the meromorphic functions $N_{\psi}(s)$ and $D(s)$ have the same poles, and the same residues at those poles. It follows that at these poles the function $\frac{N_{\psi}(s)}{sD(s)}$ is in fact bounded, and thus the only poles of the function $\frac{N_{\psi}(s)}{sD(s)}$   are the ones coming from zeros of the denominator. The proof of Theorem \ref{negative-Solution} shows there is no pole at the origin, and the proof of Theorem \ref{Inv-Laplace-th:1} gives the claimed expansion.  \end{proof}
The proof of Theorem \ref{linear_systems} can be changed slightly to give the following corollary:
\begin{corollary}
\label{linear_systems.DS} Let us suppose all the zeros of $D(s)$ are simple. Let $\{z_1,z_2,z_3,\cdots \, z_K \}$ be the first $K$ zeros of $D(s)$ within the open left half plane, ordered by their modulus. Solve the following system of linear equations for the $A_i.$
\begin{equation}
\label{eq:linearsystem.DS} D'(z_i)A_i = -\lambda_1\left\{ \frac{M_C(z_i)-1}{z_i}-M'_C (z_i)A_i +\sum_{\substack{j\neq
i,\\j=1}}^{j=K} A_j\frac{M_C(z_i)-M_C(z_j)}{z_j-z_i}\right\},
\end{equation}
\begin{enumerate}\item If $D(s)$ has $K$ zeros then the ruin probability is equal to  $\psi(u)=\sum_1^K  A_i e^{z_i u}.$ \item If $D(s)$ has more than $K$ zeros then $\sum_1^K A_i e^{z_i u}$ approximates the ruin probability. \item If the ruin probability is meromorphic of order zero, then the approximations converge to the true ruin probability as $K$ increases.
\end{enumerate}\end{corollary}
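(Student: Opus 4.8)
The plan is to follow the proof of Theorem \ref{linear_systems} essentially line by line, with the single structural change that the discrete sum $\sum_k \pi_k e^{c_k z_i}(\cdots)$ over Bonus--Malus levels is replaced throughout by the integral $\int_0^\infty e^{z_i c}(\cdots)\,dF_C(c)$ against the premium distribution, and the resulting integrals are recognized as values of the moment generating function $M_C$ and its derivative. For the first case, where $D(s)$ has exactly $K$ zeros, Theorem \ref{residue_expansion_DS} already expresses $\psi$ as a finite sum of residues at the zeros of $D(s)$ in the open left half-plane; since these zeros are assumed simple, each residue is a scalar multiple of $e^{z_i u}$, so $\psi(u)=\sum_1^K A_i e^{z_i u}$ and only the coefficients $A_i$ remain to be determined.

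To pin down the $A_i$, I would start from $\mathcal{L}(\psi(u);u,s)=\frac{N_\psi(s)}{sD(s)}$ as given by Theorem \ref{New-Rongming-equation.DS} and take the residue of both sides at the simple pole $s=z_i$. Because $z_i$ is a simple zero of $D(s)$, this gives $A_i=\frac{1}{z_i}\frac{N_\psi(z_i)}{D'(z_i)}$. Substituting the definition $N_\psi(s)=D(s)-s\lambda_1\int_0^\infty e^{sc}R_{1-\psi}(c,s)\,dF_C(c)$ and using $D(z_i)=0$ collapses this to
\[
A_i=\frac{-\lambda_1}{D'(z_i)}\int_0^\infty e^{z_i c}R_{1-\psi}(c,z_i)\,dF_C(c).
\]
I would then insert the closed form $R_{1-\psi}(c,s)=\frac{1}{s}(1-e^{-sc})+\sum_j A_j\frac{1}{z_j-s}(1-e^{-(s-z_j)c})$ derived in the proof of Theorem \ref{linear_systems}, evaluated at $s=z_i$. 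The summand with $j=i$ is a removable singularity, with $\lim_{s\to z_i}\frac{1-e^{-(s-z_i)c}}{z_i-s}=-c$, so it contributes the term $-A_i c$ exactly as in the discrete case.

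The one genuinely new computation is the term-by-term integration against $dF_C$. Here $\int_0^\infty e^{z_i c}\frac{1}{z_i}(1-e^{-z_i c})\,dF_C(c)=\frac{M_C(z_i)-1}{z_i}$ (using $\int dF_C=1$), the term $-A_i c$ integrates to $-A_i M_C'(z_i)$ after differentiating $M_C(s)=\int_0^\infty e^{sc}\,dF_C(c)$ under the integral sign, and the residual sum integrates to $\sum_{j\neq i}A_j\frac{M_C(z_i)-M_C(z_j)}{z_j-z_i}$. Convergence of each integral, and the legitimacy of interchanging the finite sum with the integral, is supplied by Lemma \ref{lem:Rint.is.entire}. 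Assembling the three pieces and multiplying through by $D'(z_i)$ yields precisely the linear system \eqref{eq:linearsystem.DS}.

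For the remaining two assertions I would argue exactly as in Theorem \ref{linear_systems}: if $D(s)$ has more than $K$ zeros, the truncated sum $\sum_1^K A_i e^{z_i u}$ discards the contributions of poles of large negative real part, which are negligible for large $u$ and hence give an approximation; and if the Laplace transform is meromorphic of order zero, the full residue expansion of Theorem \ref{residue_expansion_DS} converges, so the truncations converge uniformly to $\psi$ as $K\to\infty$. I expect the only delicate points to be the justification of differentiation under the integral sign needed to identify the middle term as $M_C'(z_i)$, and the bookkeeping of the removable singularity at $j=i$; both are controlled by the entireness and uniform convergence established in Lemma \ref{lem:Rint.is.entire}.
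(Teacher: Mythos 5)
Your proposal is correct and follows essentially the same route as the paper: the paper's own argument (a slight modification of the proof of Theorem \ref{linear_systems}) likewise takes the residue of $\mathcal{L}(\psi(u);u,s)=\frac{N_\psi(s)}{sD(s)}$ at each simple zero $z_i$, obtains $D'(z_i)A_i=-\lambda_1\int_0^\infty e^{z_i c}R_{1-\psi}(c,z_i)\,dF_C(c)$, inserts the closed form of $R_{1-\psi}$, and rewrites the resulting integrals in terms of $M_C$ and $M_C'$ to get the linear system \eqref{eq:linearsystem.DS}. Your explicit term-by-term evaluation of the integrals and handling of the removable singularity at $j=i$ simply fills in details the paper leaves implicit.
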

We point out that Corollary \ref{cor:Lundberg1} generalizes without difficulty:
\begin{corollary}\label{cor:Lundberg.DS} In the  doubly stochastic  problem
\eqref{surplus-process-BMS1}, if the moment function associated with $X$ extends to a meromorphic function, the moment function associated with $C$ extends to a meromorphic function that has no poles on the real line, and $-r$ is the first strictly negative real root of the function $D(s):=-\lambda_1-\lambda_2+\lambda_1M_C(s)+\lambda_2M_X(-s),$  then the ruin probability is bounded by $A\exp(-r_0 u),$ where $r_0$ can be chosen as any real number with $-r<-r_0.$
\end{corollary}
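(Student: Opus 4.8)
The plan is to adapt the argument of Corollary \ref{cor:Lundberg1} to the doubly stochastic setting, using the generalized results for that case. The essential structure is identical: by Theorem \ref{New-Rongming-equation.DS}, the Laplace transform of the ruin probability is $\frac{N_\psi(s)}{sD(s)}$, and this can be rewritten in the form $\mathcal{L}(\psi(t),t,s)=\frac{1}{s}-\frac{N_1(s)}{D(s)}$ where $N_1(s)=\lambda_1\int_0^\infty e^{sc}R_{1-\psi}(c,s)\,dF_C(c)$. The first step is to invoke Lemma \ref{lem:Rint.is.entire}, whose hypotheses are exactly the assumptions on the moment function of $C$ made here, to conclude that $N_1(s)$ is entire. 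Consequently the only candidate poles of the meromorphic extension of $\mathcal{L}(\psi)$ are the zeros of $D(s)$, together with the removable singularity at the origin that cancels against the $\frac{1}{s}$ term. This establishes, just as in the singly stochastic case, that the poles relevant to the exponential decay rate are precisely the zeros of $D(s)$ with negative real part.

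Next I would reproduce the absolute-convergence argument from Proposition \ref{prop:real.pole}. Since $-r$ is by hypothesis the first (rightmost) strictly negative real root of $D(s)$, and since $-r<-r_0$, the value $-r_0$ lies strictly to the right of every negative real zero of $D(s)$. As in the proof of Proposition \ref{prop:real.pole}, the nonincreasing real function $g(a)=\int_0^\infty e^{-au}\psi(u)\,du$ can be continued to the left along the real axis without encountering a pole until $a$ reaches $-r$; hence for $a=-r_0>-r$ the integral $\int_0^\infty e^{r_0 u}\psi(u)\,du$ converges absolutely. I would emphasize here that Theorem \ref{negative-Solution} and its proof apply unchanged (the non-negativity bound $|e^{-(x+iy)u}\psi(u)|=e^{-xu}\psi(u)$ does not depend on whether $C$ is discrete or continuous), so the relevant facts about pole locations carry over verbatim.

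The final step is the same elementary functional-analytic observation used in Corollary \ref{cor:Lundberg1}: absolute convergence of $\int_0^\infty e^{r_0 u}\psi(u)\,du$ shows that the continuous function $e^{r_0 u}\psi(u)$ lies in $L^1(\mathbb{R}^+)$, hence is bounded above by some constant $A$, giving $\psi(u)\leq A e^{-r_0 u}$, as claimed.

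I expect no genuine obstacle here, since the corollary is by design a direct transcription of Corollary \ref{cor:Lundberg1} into the doubly stochastic language. The only point requiring care is to confirm that the entirety of $N_1(s)$ now relies on Lemma \ref{lem:Rint.is.entire} rather than on Lemma \ref{Laplace-Exponential-Type}, and that the hypotheses on $C$ (meromorphic moment function, no poles on the real axis) are exactly what that lemma requires; this is the one place where the doubly stochastic case differs from the discrete case and so deserves an explicit remark in the proof.
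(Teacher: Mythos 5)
Your proposal is correct and coincides with the paper's intended argument: the paper offers no separate proof, remarking only that Corollary~\ref{cor:Lundberg1} ``generalizes without difficulty,'' and your writeup is exactly that generalization. In particular, you correctly isolate the one genuinely new ingredient --- that the entirety of $N_1(s)=\lambda_1\int_{0}^{\infty}e^{sc}R_{1-\psi}(c,s)\,dF_C(c)$ now rests on Lemma~\ref{lem:Rint.is.entire}, whose hypotheses are precisely the assumptions imposed on $C$ --- after which the decomposition $\mathcal{L}(\psi)=\frac{1}{s}-\frac{N_1(s)}{D(s)}$, the convergence argument of Proposition~\ref{prop:real.pole}, and the final $L^1$ bound carry over verbatim from the discrete case.
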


\section{Examples and discussion}

We give three examples of solving a Bonus-Malus problem. In one case, there are finitely many roots, so that we may find a basically exact solution in closed form, and in the other two cases the solution is an infinite series. We then give an example in the doubly stochastic case.

\begin{example}\label{ex1} Consider a Bonus--Malus system with 10 levels with premiums rate ${C}$  and steady-state
distributions, ${ \pi},$ given by $$C=(0.4, 0.8, 1, 1.2, 1.4, 1.5, 1.7, 1.8, 2, 2.2)$$ and   $\pi_n=a b^n$ with $b=1.1.$
We suppose that the number of sold contracts, $N_1(t),$ and
number of arrived claims, $N_2(t),$ are two independent
processes with intensity $\lambda_1=18$ and $\lambda_2=11$ and
claim size distributions given as above.

We take the case of $X$ being a gamma distribution with probability density function $Ax^2e^{-6x},$
Finding the real and complex roots of the resulting function $D(s),$ we find that there are only three roots in the open left half plane, $z_1 = -1.53082,$ $z_2= -8.17350-3.76034i,$ and
$z_3= -8.17350+3.76034i.$ Thus we are in the situation described by Theorems \ref{th:characterize.finite.sums} and \ref{linear_systems}, and therefore we  expect a closed form solution. Applying Proposition  \ref{linear_systems} we obtain for the ruin probability (reducing the number of digits for display purposes):
\[\begin{split}\psi(u)&=0.57414\,{{\rm e}^{- 1.5308\,u}} - 0.14781\,\cos \left(  3.7603\,u \right) {{\rm e}^{- 8.1735\,u}}\\
  & - 0.10620\,{{\rm e}^{- 8.1735\,u}}\sin \left(  3.7603\,u \right) \\
 \end{split}\]
When the above is substituted into the given integral equation (see equation \eqref{integro-differential-equation-DS} on page \pageref{integro-differential-equation-DS}) it will satisfy it to a few multiples of machine accuracy. We note that there is no exact formula for the roots of an expression such as $D(s),$ and we thus must approximate the true value of the roots using a numerical method. Increasing the number of digits used when finding the roots increases the absolute accuracy. \end{example}

\begin{example}\label{ex3} We take the case $\pi_i = 0.1$ and $X$ a generalized Raleigh distribution (also known as a Maxwell distribution or a Chi distribution) with probability density function $\sim x^2e^{-x^2}.$  In this case, we find that $D(s)$ has infinitely many zeros in the left half plane, and we looked at the cases of a 1 term series solution, 7 term series solution,  127 term series solution, and a 289 terms series solution (see Figure \ref{fig:chi}a on page \pageref{fig:chi}). We  substituted each solution into the left hand side of the given integral equation (given in the Appendix, as equation \eqref{integro-differential-equation-DS} ), and evaluated the integral. We call the resulting function the error, and the error, for each solution discussed, is shown in Figure  \ref{fig:chi}b.  We note that the function obtained is bounded everywhere, and is largest at the origin. Presumably this is due to having omitted high-order terms of the series solution that decay rapidly and thus contribute primarily close to the origin. We observe the occurrence of a Gibbs-type phenomenon, rather as is seen with partial sums of Fourier series.
\DeclareGraphicsExtensions{.eps}


\end{example}

\begin{example}\label{ex2} We take  the case $\pi_i = 0.1$ and $X$ a folded normal distribution with probability density function $\sim e^{-x^2},$ $C$ being the same as in the previous example. (see Figure \ref{fig:normal}ab on page \pageref{fig:normal}). Comparing this case with the previous case, we see that the ruin probability is initially smaller than in the Raleigh case, but in the Raleigh case the ruin probability decreases sharply until it is smaller than in the case of the folded normal distribution. For larger values of time, the ruin probability for the Raleigh case eventually begins to dominate the ruin probability for the normal case.


\end{example}

Comparing the Raleigh  case with the folded normal case, we see that the ruin probability for the normal case is initially smaller than in the Raleigh case, but in the Raleigh case the ruin probability decreases sharply until it is smaller than in the case of the folded normal distribution. For larger values of time, the ruin probability for the Raleigh case eventually begins again to dominate the ruin probability for the normal case.
We can also say, broadly speaking, in the cases where we have an infinite series solution,   the ruin probability begins by decreasing rapidly, then begins decreasing less rapidly and behaving approximately linearly, and finally it starts to decay exponentially. The  Raleigh case in particular behaves approximately linearly over a wide range.
Thus, we see that it is possible to obtain really detailed information about the relative behavior of the ruin probabilities for different but related Bonus--Malus systems using our method.

Our numerical experience has been that the systems of linear equations obtained are really well-behaved. Possibly this is surprising, because Laplace transform methods tend, broadly speaking, to lead to ill-conditioned numerical problems. The most ill-conditioned system we have seen had condition number 45.8, which occured in the 289 term case, using the Chi distribution.  Thus, we did not see any signs of numerical problems, but we nevertheless used 60 digit floating point arithmetic throughout. The root-finding was based on Newton's method. The roots need to be found accurately, and missing roots can of course perturb the results significantly.

\begin{example}\label{ex4}Consider the doubly stochastic case, taking both $C$ and $X$ to be a gamma distribution, $X\sim \exp(-bu)$
and $C\sim \exp(-au).$ There is only one pole, and we find a closed form solution for the ruin probability, namely
$$\psi(u)= \frac{(a+b)\lambda_2}{b(\lambda_1+\lambda_2)}e^{\frac{-(\lambda_1-\lambda_2)}{(\lambda_1+\lambda_2)}u}$$ where we have supposed that $\lambda_1>\lambda_2.$ We note that the moment functions are just meromorphic, so this is a case where the solution had to be checked by substitution into the integral equation.
\end{example}

 As has been seen, our methods provide an approximation by a sum of real  exponentials, multiplied by trigonometric functions, and sometimes also polynomials. This is a quite general functional form.  The general idea  of approximating an unknown function by sums of small numbers of real exponentials is not new and has been found to be often effective in actuarial applications.
The difference between these \textit{ad hoc} approximations and our above results is that our results are based on advanced mathematical features of the problem that allow us to find precisely those functions that are best suited to the problem. It is likely, thus, that we obtain  the best possible approximations that can be gotten with a small number of terms of the form we are considering.

\section{Conclusion}
We have presented a series solution for the ruin probability of a Bonus-Malus system,  of the form
$$\sum A_i u^{m_i}  e^{-{k_i} u} \sin b_i u + B_i u^{m_i}  e^{-k_i u} \cos b_i u .$$
In certain cases, which we have characterized, the series terminates, thus giving exact solutions in closed form.
In cases where the convergence of the series cannot be determined, we are still able to obtain approximations to the ruin probability. We gave examples showing that the method works.
Clearly, the results presented here  can be employed to: ({\bf 1}) compare two
given Bonus--Malus systems; ({\bf 2}) evaluate behavior of a given
Bonus--Malus system with respect to initial wealth of insurer; and
({\bf 3}) design an optimal Bonus--Malus system, based upon
{either} number of Bonus--Malus system levels {or} tail
behavior of claim size distribution. The second application may be
justified by Xianbin (2005)'s findings who established that for any
Bonus--Malus System, there always exists an unique steady-state
distribution. The third application is comparing some possible
Bonus--Malus systems via their ruin probabilities.
We also mention that, to our knowledge, the problem of precisely when  moment functions
extend to meromorphic functions in the complex plane has never been thoroughly studied, and this may in the future be a problem of interest. \bigskip

\centerline{{\huge {\rotatebox[]{270}{\textdagger}}\!{\rotatebox[]{90}{\textdagger}}}}

\section*{Appendix: Mathematical results, and a proof  of a result of Rongming's}

\begin{definition}
\label{Cai-family-distribution} A continuous random variable $X$ is
a member of  Cai's family of distributions if its density
function satisfies the following conditions:
\begin{description}
    \item[$A_1$)]  $\int_0^\infty |f^\prime(y)|dy<\infty$, $\int_0^\infty |f^{\prime\prime}(y)|dy<\infty$
    \item[$A_2$)] The survival function ${\bar F}(u):=\int_u^\infty f(y)dy$ is twice continuously differentiable on
    $[0,\infty);$ and both ${\bar F}^\prime(u)$ and ${\bar F}^{\prime\prime}(u)$ are
    bounded on $[0,\infty).$
\end{description}
\end{definition}
Many well known claim size distributions, such as exponential, exponential mixtures, Erlang, Pareto (with finite mean), Lognormal,
etc, satisfy the above conditions, see Cai (2004) for more detail.

The well known Paley-Wiener theorem states that the
Fourier transform of an $L_2({\Bbb R})$ function vanishes outside
of an interval $[-T,T]$ if and only if the given function extends to an entire
function of exponential type $T$, see Dym \& McKean (1972, page
158) for more detail. \label{Paley.Wiener}

We also have need of the theory of
residues for meromorphic functions. Briefly, meromorphic functions are analytic functions without essential singularities or branch points, and $\Res(f,z_i)$ is the
coefficient of $\frac{1}{z-z_i}$ in the Laurent series expansion
of $f$ around $z_i.$   The importance of the
theory of residues perhaps rests on two facts: many integrals can
be evaluated in terms of a sum of residues, and residues can be evaluated efficiently by formulas:
\begin{equation}\Res(f,z_0)=\frac{1}{(m-1)!}\lim_{s\rightarrow z_0}\left(\frac{d}{ds}\right)^{m-1}\!\!\!\!\!\!\!\!\!(s-z_0)^m f(s)
\label{eq:residue.formula}\end{equation}
where $m$ is the order of the pole, see for example Ablowitz \&
Fokas (1990, \S 4). Poles of order $1$ are also known as simple poles.

\begin{lemma}\label{Laplace-Exponential-Type}
Suppose $f(x)$ is bounded and real on the real axis. Then, $R_f(a,s):=
\mathcal{L}(f(t),t,s)-e^{-as}\mathcal{L}(f(t+a),t,s),$ as a function of $s,$ is in the
range of the Laplace operator, is entire, and is of exponential type.
\end{lemma}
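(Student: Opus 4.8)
The plan is to first collapse the two-term definition of $R_f(a,s)$ into a single finite integral, and then read off all three conclusions from that explicit form. On the half-plane $\Re(s)>0$, where both Laplace transforms converge absolutely (using only that $f$ is bounded), I would simplify $e^{-as}\mathcal{L}(f(t+a),t,s)$ by the substitution $v=t+a$, which turns $\int_0^\infty f(t+a)e^{-st}\,dt$ into $e^{sa}\int_a^\infty f(v)e^{-sv}\,dv$; the prefactor $e^{-as}$ then cancels the $e^{sa}$, leaving $\int_a^\infty f(v)e^{-sv}\,dv$. Subtracting this from $\mathcal{L}(f(t),t,s)=\int_0^\infty f(t)e^{-st}\,dt$ yields the key identity
\[ R_f(a,s)=\int_0^a f(t)e^{-st}\,dt, \]
valid initially for $\Re(s)>0$ but extending, since the right-hand side is a finite integral, to all of $\mathbb{C}$. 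I regard this telescoping step as the crux of the argument; once it is in place there is no genuine obstacle remaining.

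With this identity in hand, all three claims are essentially immediate. For membership in the range of the Laplace operator I would simply observe that $\int_0^a f(t)e^{-st}\,dt=\mathcal{L}(g)(s)$, where $g$ is the truncation of $f$ to $[0,a]$ (that is, $g=f\cdot\mathbf 1_{[0,a]}$); since $f$ is bounded, $g$ is bounded and compactly supported, hence in $L^1\cap L^2(\mathbb R)$, so $R_f(a,\cdot)$ is manifestly a Laplace transform. For the exponential-type bound I would estimate directly: $|e^{-st}|=e^{-\Re(s)t}\le e^{a|s|}$ for $t\in[0,a]$, so
\[ |R_f(a,s)|\le \|f\|_\infty\int_0^a e^{a|s|}\,dt = a\,\|f\|_\infty\,e^{a|s|}, \]
which is exactly exponential type $T=a$ in the sense of Definition \ref{exponential-type}. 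Entirety follows because the integrand $f(t)e^{-st}$ is entire in $s$ for each fixed $t$ and the integration runs over the compact set $[0,a]$; I would justify differentiation under the integral sign, or equivalently apply Morera's theorem after interchanging $\oint_\gamma$ with $\int_0^a$, the interchange being legitimate since the integrand is bounded on $\gamma\times[0,a]$.

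Alternatively, to match the appeal to Paley--Wiener recorded in the body of the paper, one relates the Laplace transform of the compactly supported $L^2$ function $g$ to its Fourier transform by the change of variable $s\mapsto i\omega$: writing $R_f(a,s)=\hat g(-is)$ up to the chosen Fourier convention, the Paley--Wiener theorem quoted on page \pageref{Paley.Wiener} gives at once that $\hat g$ is entire of exponential type, and precomposing with the entire map $s\mapsto -is$ preserves both properties. The only mild subtlety worth flagging is this conversion between the one-sided Laplace transform over $[0,a]$ and the Fourier transform of an $L^2$ function, since Paley--Wiener is usually phrased for support in a symmetric interval $[-T,T]$; this is handled by an elementary shift of the support interval, and in any case the direct estimates above already establish entirety and exponential type without invoking Paley--Wiener. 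I would also note that the hypothesis that $f$ be real is not needed for any of the three conclusions --- boundedness alone suffices --- though it is harmless and is used elsewhere in the paper.
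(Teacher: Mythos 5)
Your proof is correct, and its crux coincides with the paper's: both arguments identify $R_f(a,s)$ with the Laplace transform of the truncation $f\cdot\mathbf{1}_{[0,a]}$, a compactly supported bounded function. The paper reaches this identification by quoting the second shifting (translation) property of the Laplace transform, writing $e^{-as}\mathcal{L}(f(t+a);t,s)=\mathcal{L}(u_a(t)f(t);t,s)$ with $u_a$ the Heaviside step at $a$, whereas you derive it by the explicit substitution $v=t+a$ on the half-plane $\Re(s)>0$; these are the same computation presented in two ways. Where you genuinely diverge is in how entirety and exponential type are then established: the paper disposes of both in one stroke by citing the Paley--Wiener theorem for compactly supported functions, while your main line proves them directly --- Morera's theorem (or differentiation under the integral sign) over the compact interval $[0,a]$ for entirety, and the estimate $|R_f(a,s)|\le a\,\|f\|_\infty\,e^{a|s|}$ for exponential type. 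Your elementary route is more self-contained, produces the explicit type constant $T=a$, and sidesteps the bookkeeping of converting the one-sided transform over $[0,a]$ into the Fourier transform of an $L^2$ function on a symmetric interval $[-T,T]$ --- a genuine (if mild) gap in rigor that the paper's one-line citation glosses over and that you correctly flag; since you also sketch the Paley--Wiener route as an alternative, your write-up subsumes the paper's argument. Your closing remark that reality of $f$ is not needed (boundedness, plus measurability, suffices) is also correct.
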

\begin{proof} By the translation property of the Laplace
transform, $e^{-as}\mathcal{L}(f(t+a);t,s)$ is equivalent to
$\mathcal{L}(u_a (t) f(t);t,s)$ where $u_a$ is the Heaviside step
function, defined by
\[ u_a (t):= \begin{cases}0 &\text{if $t\leq0$, and}\\1 &\text{if $t>0$.} \end{cases}\]
Thus, $R_f(a,s)=\mathcal{L}(\{f(t)-u_a(t)f(t)\};t,s)$ is
in the range of the Laplace operator, as claimed. Moreover, it is
the Fourier-Laplace transform of a compactly supported function,
and thus by the aforementioned Paley-Wiener theorem  for compactly
supported functions, it is analytic and of
exponential type, with respect to the $s$ variable.
\end{proof}
Let us say that a function $f$ is \textit{meromorphic of order zero} if there exist an increasing sequence of contours $\Gamma_n$ that exhaust the plane and are such that the  sequence
$$\oint_{\Gamma_n} \left|\frac{f(z)}{z^{p+1}}\right||dz|$$ is bounded. (By the term exhausting the plane, we mean that one contour lies successively inside the other and that the distance from $\Gamma_n$ to the origin tends to infinity. See page 56 of Markushevich (1965). )

\begin{theorem}
\label{Inv-Laplace-th:1}\label{existence.thm.II} Suppose that a given Bonus--Malus system
satisfies the conditions given by Theorem \ref{BMS-Rongming-equation}, and that the Laplace transform of the ruin probability is meromorphic of order zero.
Then,
\begin{eqnarray*}
  \psi(u) &=& \sum_{z_j} \Res \left(e^{su}\frac{N_\psi(s)}{sD(s)},s=z_j\right),
\end{eqnarray*}
where the $z_j$ are the zeros of $D(s)$ in the open left half-plane.
  \end{theorem}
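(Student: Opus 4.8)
The plan is to expand the Laplace transform $G(s) := N_\psi(s)/(sD(s))$ into the sum of the principal parts of its poles, and then to invert term by term, identifying the contribution of each principal part with a residue of $e^{su}G(s)$. The meromorphic of order zero hypothesis is what makes both the expansion and the termwise inversion legitimate.

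First I would pin down the singular structure of $G$. By Lemma \ref{lem:poles.at.zeros.of.Ds} the poles of $G$ lie only at the zeros of $D(s)$ in the open left half-plane, and the argument in Theorem \ref{negative-Solution} shows that the apparent pole at the origin is cancelled by a first-order zero of the numerator, so $G$ is in fact holomorphic on and to the right of the imaginary axis and in a neighbourhood of $0$. I would also note that, being the analytic continuation of the Laplace transform of the bounded function $\psi$, $G(s)$ tends to $0$ as $s \to +\infty$ along the real axis.

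Next, since $G$ is meromorphic of order zero, Markushevich's Theorem 2.7 guarantees that $G$ equals the sum of the principal parts of its poles together with at most a polynomial of degree $p$; the decay of $G$ at infinity just noted forces this polynomial to vanish. Thus $G$ is represented by a sum of principal parts, one for each zero $z_j$ of $D(s)$ in the open left half-plane, and the representation converges on the expanding contours $\Gamma_n$ furnished by the definition. Inverting the Laplace transform term by term, the inverse transform of the principal part attached to $z_j$ is, by the residue formula \eqref{eq:residue.formula}, exactly $\Res\!\left(e^{su}G(s), z_j\right)$ --- a sum of terms of the form $P(u)e^{z_j u}$ whose degree is one less than the order of the pole. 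Summing over $j$ gives the claimed expansion $\psi(u) = \sum_{z_j}\Res\!\left(e^{su}G(s), z_j\right)$.

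The main obstacle is the justification of the termwise inversion, that is, the interchange of the inverse Laplace transform with the infinite sum of principal parts. Equivalently, one must show that the contour integrals $\frac{1}{2\pi i}\oint_{\Gamma_n} e^{su}G(s)\,ds$, which by Cauchy's residue theorem equal the partial sums $\sum_{z_j \text{ inside } \Gamma_n}\Res\!\left(e^{su}G(s), z_j\right)$, converge to $\psi(u)$ and not to $\psi$ plus a spurious boundary contribution. This is precisely what the order-zero bound $\oint_{\Gamma_n}|G(z)/z^{p+1}|\,|dz| = O(1)$ is engineered to control: it forces the contribution of $G$ along the expanding arcs to be negligible after the appropriate normalization, and it simultaneously yields the uniform convergence of the residue series in $u$. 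Once this bound is invoked the two viewpoints agree and the proof is complete.
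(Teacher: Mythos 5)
Your framing --- expand $G(s)=N_\psi(s)/(sD(s))$ into the principal parts of its poles via Markushevich's theorem and then invert term by term --- is a different packaging than the paper's, which never forms a Mittag--Leffler expansion: it starts from Mellin's inversion formula $\psi(u)=\frac{1}{2\pi i}\int_\Gamma e^{su}G(s)\,ds$ (with $\Gamma$ essentially the imaginary axis) and closes that contour using only the left half-plane portions $\Gamma'_n$ of the exhausting contours, so that Cauchy's residue theorem produces the partial sums of residues directly. You correctly identify that in either formulation everything reduces to showing the expanding-contour contributions vanish. The problem is that your resolution of exactly that point does not work as stated.

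The gap is concrete: you assert that the order-zero bound $\oint_{\Gamma_n}\left|G(z)/z^{p+1}\right||dz|=O(1)$ by itself makes the boundary contribution negligible. It cannot. That bound allows $G$ to grow like $|z|^{p+1}$ along the contours, so the decay must come from the other factor, and the decisive ingredient --- present in the paper's proof and absent from yours --- is that for fixed $u>0$ the factor $e^{su}$ satisfies $|e^{su}|=e^{u\Re(s)}$ and hence decays faster than any power of $1/|s|$ as $|s|\to\infty$ in the open left half-plane; it is the product of this super-polynomial decay with the averaged polynomial bound on $G$ that forces $\int_{\Gamma'_n}e^{su}G(s)\,ds\to 0$. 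Moreover, you integrate over the \emph{full} closed contours $\Gamma_n$, whose right half-plane portions have $|e^{su}|$ growing exponentially for $u>0$; the order-zero bound controls nothing there, so your closed-contour integrals are not visibly related to $\psi(u)$ at all (the paper avoids this by closing the Bromwich line with $\Gamma'_n$ only; to rescue your version you would need to use holomorphy of $e^{su}G(s)$ in the closed right half-plane to deform those arcs onto the imaginary axis). Finally, nothing in your write-up ever ties the residue sums to $\psi(u)$: that link \emph{is} Mellin's inversion formula, which is the first line of the paper's proof and has to be invoked explicitly rather than treated as an equivalent afterthought. A lesser issue of the same kind: your claim that the degree-$p$ polynomial in the expansion vanishes because $G\to 0$ along the positive real axis requires knowing that the infinite sum of principal parts also decays there, which does not follow from convergence uniform on compact sets and would need its own argument.
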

\begin{proof}  Lemma \ref{lem:poles.at.zeros.of.Ds} insures that the poles of the
Laplace transform of $\psi(u)$ are all at zeros of $D(s)$ that are in the open left half plane.
Thus, the inverse Laplace transform is given by Mellin's inverse formula, in terms of the following Cauchy-type integral:
\begin{eqnarray}
  \psi(u) &=& \frac{1}{2\pi i} \int_\Gamma e^{us}
  \frac{N_{\psi}(s)}{sD(s)}ds.\label{eq:inverse.Laplace.int}
\end{eqnarray}
where $\Gamma$ is the  vertical line $\{z|\, \Im(z)=0\}$ in the
complex plane ${\Bbb C}$, bypassing the origin by a semi-circular cutout to the left.
From the meromorphic  of order zero condition  we have
that the integrals \[\oint_{\Gamma_n} \left|{ \frac{N_{\psi}(s)}{s^{p+2}D(s)}}\right||dz|\]
 are bounded. Along a ray $s:=re^{i\theta}$ that happens to be in the left half plane, the function $e^{su},$ where $u$ is positive, will decay faster than any power of $\frac1s .$ If we denote by $\Gamma'_n$ the  part of the contours $\Gamma_n$ that lie in the left half plane, we see that  the integrals
$\int_{\Gamma'_n} e^{us}
  \frac{N_{\psi}(s)}{sD(s)}\,ds$
    will tend to zero as $n$ goes to infinity. Using the curves $\Gamma'_n$ to close the contour in the integral \eqref{eq:inverse.Laplace.int}, Cauchy's residue theorem gives, for all positive $u,$
\[\displaystyle \psi(u) = \frac{1}{2\pi i} \int_\Gamma e^{us}
  \frac{N_{\psi}(s)}{sD(s)}ds = \sum_{z_j} \Res \left(e^{s u}\frac{N_\psi(z_j)}{sD(s)},s=z_j\right).\]   \end{proof}

The next result is due to Rongming et al. (2007), we give a simpler proof of their result.
Consider a double stochastic compound Poisson process
\begin{eqnarray} \label{surplus-process-DS}
  U_t &=& u+\sum_{i=1}^{N_1(t)}C_i-\sum_{j=1}^{N_2(t)}X_j\\
\nonumber  &=&u+S_t,
\end{eqnarray}
where  $C_1,C_2,\cdots$ and $X_1,X_2,\cdots,$ respectively, are
two i.i.d. random samples from independent random premium $C$ and
random claim size $X$, two independent Poisson processes $N_1(t)$
and $N_2(t)$ (with intensity rates $\lambda_1$ and $\lambda_2$)
are, respectively, stand for claims and purchase request
processes, and $u$ represents initial wealth $u$ of the process.
\begin{theorem}\label{Rongming-2007}
The survival probability $\widetilde{\psi}(u)$ of the double
stochastic compound Poisson process \eqref{surplus-process-DS}
satisfies
\begin{equation}\label{integro-differential-equation-DS}
-(\lambda_1+\lambda_2)\widetilde{\psi}(u)+\lambda_1
E(\widetilde{\psi}(u+C))+\lambda_2\int_0^{u}\widetilde{\psi}(u-x)dF_X(x)=0,
\end{equation}
where $\lim_{u\rightarrow\infty}\widetilde{\psi}(u)=1,$ $F_X$
stands for the common distribution function of random claim size
$X$ and $E(\widetilde{\psi}(u+C))$ represents an expectation under
random premium $C.$
\end{theorem}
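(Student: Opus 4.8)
The plan is to exploit a feature that distinguishes the surplus process \eqref{surplus-process-DS} from the classical Cram\'er--Lundberg model: here the premium income arrives as a pure jump (compound Poisson) process rather than as a continuous deterministic inflow, so that between successive jumps the surplus $U_t$ stays constant. This makes $U_t$ a pure-jump Markov process, and it lets me derive \eqref{integro-differential-equation-DS} by conditioning on the first event of the superposed counting process $N_1+N_2$ rather than on an infinitesimal time step; this avoids the $o(h)$ bookkeeping of the usual infinitesimal argument and is what makes the proof simpler.

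First I would record that $N_1+N_2$ is a Poisson process of rate $\lambda_1+\lambda_2$, and that by the thinning description of a superposition each of its events is of type $1$ (a premium arrival, of size distributed as $C$) with probability $\lambda_1/(\lambda_1+\lambda_2)$ and of type $2$ (a claim, of size distributed as $X$) with probability $\lambda_2/(\lambda_1+\lambda_2)$, independently across events. Fix $u>0$ and let $\tau$ be the first jump time. Since the surplus is constant and equal to $u>0$ on $[0,\tau)$, no ruin can occur before $\tau$, so survival is governed entirely by the first jump together with the evolution after $\tau$; by the strong Markov property at $\tau$ the post-jump process is an independent fresh copy of the original process started from the post-jump surplus.

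Then I would split into the two jump types. If the first jump is a premium of size $C\ge 0$, the surplus becomes $u+C\ge 0$ and the conditional survival probability is $\widetilde\psi(u+C)$, averaged over $C$, giving the term $E(\widetilde\psi(u+C))$. If the first jump is a claim of size $X$, the surplus becomes $u-X$: when $X\le u$ the conditional survival probability is $\widetilde\psi(u-X)$, while when $X>u$ ruin occurs at that very jump and the survival probability is $0$. Averaging therefore contributes $\int_0^u \widetilde\psi(u-x)\,dF_X(x)$, cut off at $u$ precisely because a claim exceeding current wealth is immediately fatal. Weighting the two cases by $\lambda_1/(\lambda_1+\lambda_2)$ and $\lambda_2/(\lambda_1+\lambda_2)$ and clearing the denominator yields $(\lambda_1+\lambda_2)\widetilde\psi(u)=\lambda_1 E(\widetilde\psi(u+C))+\lambda_2\int_0^u \widetilde\psi(u-x)\,dF_X(x)$, which is exactly \eqref{integro-differential-equation-DS} after transposing.

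The two points needing care are the regeneration step and the boundary condition $\lim_{u\to\infty}\widetilde\psi(u)=1$. For the former I would check that $\tau$ is a stopping time for the natural filtration and use the stationary independent increments of the superposition so that the strong Markov property genuinely produces an independent copy after $\tau$; the only delicate measurability issue is splitting the first jump into its type and its size, which the thinning description handles. The boundary condition I expect to be the main obstacle, since the equation itself is essentially a renewal identity: to get $\widetilde\psi(\infty)=1$ I would invoke the strong law of large numbers, $S_t/t\to \lambda_1 E(C)-\lambda_2 E(X)$ almost surely, so that under the net profit condition $\lambda_1 E(C)>\lambda_2 E(X)$ one has $S_t\to+\infty$ almost surely, whence $\psi(u)=P(T_u<\infty)\to 0$ and $\widetilde\psi(u)\to 1$. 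This is the step where the hypotheses must be made explicit, because without a net profit condition the asserted limit can fail.
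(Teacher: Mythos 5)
Your proof is correct, but it takes a genuinely different route from the paper. The paper uses the classical infinitesimal argument: it conditions on the five possible configurations of claim/premium events in a short interval $(0,\Delta t]$, tracks all probabilities up to $o(\Delta t)$, then divides by $\Delta t$ and lets $\Delta t\to 0+$. You instead condition on the \emph{first jump} of the superposed Poisson process $N_1+N_2$ of rate $\lambda_1+\lambda_2$, identify its type by thinning, and apply the strong Markov property at that jump time; the equation then appears directly as an exact renewal-type identity, with no $o(\Delta t)$ bookkeeping and no limit to justify. The key structural observation enabling this --- that the surplus is constant between jumps, so ruin can only occur at a claim epoch --- is sound, and your treatment of claims exceeding $u$ (survival probability $0$, hence the cutoff at $u$ in the integral) matches the paper's case $A_3$ exactly. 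What your route buys is a shorter and arguably more rigorous derivation for this particular model; what the paper's infinitesimal route buys is robustness, since it extends to models where the state evolves between jumps (e.g., with interest or investment returns, the setting of Rongming et al.), where a clean first-jump decomposition of your form is no longer available because the pre-jump surplus is not constant. One further difference worth noting: the paper's proof derives only the integral equation and is silent on the side condition $\lim_{u\to\infty}\widetilde\psi(u)=1$, whereas you supply an argument for it (strong law for the compound Poisson differences, so $S_t\to+\infty$ a.s.\ and $\psi(u)\to 0$) and correctly flag that it requires the net profit condition $\lambda_1 E(C)>\lambda_2 E(X)$ and finite means --- a hypothesis the theorem statement leaves implicit. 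That is a genuine improvement in completeness over the paper's own proof, not a defect of yours.
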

\emph{Proof.} Considering the surplus process $U_t$ in a very
short time interval $(0,\Delta t],$ the following five possible
events can be considered:
\begin{description}
    \item[(1)] $A_1\equiv(\hbox{ no claim and no premium income in }(0,\Delta
    t]),$ where $P(A_1)=(1-\lambda_1\Delta t)(1-\lambda_2\Delta t)+o(\Delta t)$
    \item[(2)] $A_2\equiv(\hbox{no claim and one premium income in }(0,\Delta
    t]),$ where $P(A_2)= \lambda_1\Delta t(1-\lambda_2\Delta t)+o(\Delta t)$
    \item[(3)] $A_3\equiv(\hbox{one claim and no premium income in }(0,\Delta
    t]),$ where $P(A_3)= \lambda_2\Delta t)(1-\lambda_1\Delta t)+o(\Delta t)$
    \item[(4)] $A_4\equiv(\hbox{one claim and one premium income in }(0,\Delta
    t]),$ where $P(A_4)= \lambda_1\Delta t\lambda_2\Delta t+o(\Delta t)$
    \item[(5)] $A_5\equiv(\hbox{other cases occurring in }(0,\Delta
    t]),$ where $P(A_5)=o(\Delta t).$
\end{description}
By conditioning of the survival probability $\widetilde{\psi}(u)$
on the above five cases, one may obtain
\begin{eqnarray*}
  \widetilde{\psi}(u) &=&
  P(A_1)\widetilde{\psi}(u)+P(A_2)\int_{0}^{\infty}\widetilde{\psi}(u+c)dF_C(c)+
    \\&&P(A_3)\left[\int_{0}^{u}\widetilde{\psi}(u-x)dF_X(x)+\int_{u}^{\infty}0dF_X(x)\right]
    \\&&+P(A_4)\int_{0}^{\infty}\int_{0}^{\infty}\widetilde{\psi}(u+c-x)dF_C(c)dF_X(x)+o(\Delta
  t)\\
  &=&\left [1-(\lambda_1+\lambda_2)\Delta t+\lambda_1\lambda_2(\Delta t)^2
  \right]\widetilde{\psi}(u)+
    \\&&\left[\lambda_1\Delta t-\lambda_1\lambda_2(\Delta t)^2
  \right]\int_{0}^{\infty}\widetilde{\psi}(u+c)dF_C(c)\\
  &&+\left[\lambda_2\Delta t-\lambda_1\lambda_2(\Delta t)^2
  \right]\int_{0}^{u}\widetilde{\psi}(u-x)dF_X(x)\\
  &&+\lambda_1\lambda_2(\Delta t)^2\int_{0}^{\infty}\int_{0}^{\infty}\widetilde{\psi}(u+c-x)dF_C(c)dF_X(x)+o(\Delta
  t)\\
  \Rightarrow && \left [-(\lambda_1+\lambda_2)\Delta t+\lambda_1\lambda_2(\Delta t)^2
  \right]\widetilde{\psi}(u)+\\
    &&\left[\lambda_1\Delta t-\lambda_1\lambda_2(\Delta t)^2
  \right]\int_{0}^{\infty}\widetilde{\psi}(u+c)dF_C(c)\\
  &&+\left[\lambda_2\Delta t-\lambda_1\lambda_2(\Delta t)^2
  \right]\int_{0}^{u}\widetilde{\psi}(u-x)dF_X(x)\\
  &&+\lambda_1\lambda_2(\Delta t)^2\int_{0}^{\infty}\int_{0}^{\infty}\widetilde{\psi}(u+c-x)dF_C(c)dF_X(x)+o(\Delta
  t)
\end{eqnarray*}
Dividing both sides by $\Delta t$ and letting $\Delta t \longrightarrow 0+$ leads to the desired result. $\square$

\label{'ubl'}
\end{document}